\newcommand\redsout{\bgroup\markoverwith{\textcolor{red}{\rule[0.5ex]{2pt}{0.4pt}}}\ULon}
\newtheorem{theorem}{Theorem}
\numberwithin{theorem}{subsection}
\numberwithin{theorem}{section}
\numberwithin{lemma}{section}
\newtheorem{proposition}{Proposition}
\numberwithin{proposition}{section}
\numberwithin{corollary}{section}
\numberwithin{equation}{section}
\numberwithin{equation}{subsection}
\newcommand{\RN}[1]{%
  \textup{\uppercase\expandafter{\romannumeral#1}}%
}
\newcommand{\R}{\mathbb{R}}
\newcommand{\Z}{\mathbb{Z}}
\newcommand{\bH}{\mathbf{H}}
\newcommand{\cH}{\mathcal{H}}
\newcommand{\tlambda}{\tilde{\lambda}}
\newcommand{\C}{\hat{C}}
\titleformat*{\section}{\large}
\titleformat{\subsection}[runin]
  {\normalfont\normalsize}{\thesubsection}{1em}{}
\titleformat{\subsubsection}[runin]
  {\normalfont\small}{\thesubsubsection}{1em}{}
\titleformat*{\paragraph}{\large\bfseries}
\titleformat*{\subparagraph}{\large\bfseries}
\newtheorem*{theorem-non}{Theorem}
\title{\normalsize \textbf{FLAT TRACE DISTRIBUTION OF THE GEODESIC FLOW ON COMPACT HYPERBOLIC PLANE}}
\author{Hy P.G. Lam \footnote{Research partially supported by NSF RTG grant DMS-1502632.}}
\date{}
\begin{document}
\maketitle

\begin{abstract}
    {\sc Abstract. } In this paper, we establish the spectral decomposition of the Koopman operator and determine the flat-trace distribution associated with the geodesic flow on the co-circle bundle over the compactification of Poincaré upper half-plane $\mathbf{H}^2 = \{z \in \mathbb{C} : \Im(z) > 0\}$, equipped with the hyperbolic metric $ds^2 = \frac{dz^2}{\Im(z)^2}$.
\end{abstract}
\section{\centering \sc Introduction}

Let $(M,g)$ be a compact Riemannian manifold with constant negative curvature equipped with a metric tensor $g$ and $S^*M$ be the cosphere bundle of $M$. The classical dynamics associated with the phase space $S^*M$ is the contact Hamiltonian flow 
\begin{equation*}
    G^t : (S^*M, \mu_L) \rightarrow (S^*M, \mu_L),~  G^t = \exp(tX_H)
\end{equation*} 
where $X_H \in C^\infty(S^*M, TS^*M)$ is the Hamiltonian vector field defined by $\alpha_{\text{can}}(X_H) = -dH$. 
Here, $\alpha_{\text{can}}$ denotes the canonical contact form on $S^*M$ induced from the projection $\pi: S^*M \rightarrow M$. Let $\hat{P} = i\mathcal{L}_{X_H}$ be the generator of the Koopman semigroup via 
\begin{equation*}
    \exp(-it\hat{P}) f = f  \circ G^t, ~~ t\geq 0
\end{equation*}
for $\mathbb{C}$-valued functions $f\in L^2(S^*M)$. The composition operator $V^t f  :=  f \circ G^t $ is called the \textit{Koopman operator}. We also write $\hat{L} = -i\hat{P}$ to denote the Liouville operator, namely the Lie-derivative with respect to $X_H$ 
\begin{equation*}
    \hat{L}f =  \{H, f\}
\end{equation*}
where $\{\cdot,\cdot\}$ denotes the Poisson bracket.  
The classical result by \cite{Ru} states that, for a surface of constant negative curvature, the geodesic flow on the unit (co)tangent bundle is a classic example of an Anosov flow, which exhibits uniform hyperbolic behavior - characterized by a continuous splitting of the tangent bundle into stable, unstable and flow directions
\begin{equation*}
    T_{\xi_x}S^*M = E^s(\xi_x) \oplus E^u(\xi_x) \oplus E^0(\xi_x),
\end{equation*}
where $ E^0(\xi_x) = \R X_H(\xi_x)$ and there exists constants $C, \theta$ such that for $v\in E^s(\xi_x)$ and $w \in E^u(\xi_x)$, 
\begin{align*}
    &||dG^t_{\xi_x}v ||_{g_S} \leq C e^{-\theta t}||v||_{g_S} \text{~ for all~} t \geq 0, \\
    &||dG^{-t}_{\xi_x}w ||_{g_S} \leq C e^{-\theta t}||w||_{g_S} \text{~ for all~} t \geq 0.
\end{align*}
In this setting, the geodesic flow satisfies the Lefschetz condition in which all of periodic orbits $\gamma$ associated with the length spectrum of $S^*M$ are non-degenerate. That is, the maps $I-\mathbf{P}_{\gamma}$ are invertible where $\mathbf{P}_{\gamma}$ denotes the linearized first-return map (see \cite{L} for more details). As a result of \cite{G}, the flat-trace distribution under this condition is the formula 
\begin{equation}
    \text{Tr}^\flat (V^t) = \sum_{\gamma \in \mathcal{P}} \frac{T^\#_\gamma}{|\det(I - \mathbf{P}_{\gamma})|}\delta(t- T_\gamma)
\end{equation}
where $\gamma$ runs over the set of primitive closed unit-speed geodesics. $T_\gamma$, $T^\#_\gamma$ are respectively the length and prime period of $\gamma$.

On the upper half-plane $\mathbf{H}^2 = \{z = x + iy \in \mathbb{C}: y > 0\}$ equipped with the hyperbolic metric 
$$
ds^2 = \frac{dx^2 + dy^2}{y^2},
$$
the unit tangent bundle $S\bH^2$ is the restriction of $T \bH^2$ to the submanifold of unit vectors under this metric. $S^*\bH^2$ is identified with $S\bH^2$ via the musical isomorphism. The group of orientation-preserving isometries on $\bH^2$ is identified with $G= PSL(2,\R)$ acting transitively by M\"{o}bius transformations. Given a point $(z,\xi)$ on $S^*\bH^2$, an element $g'$ from the isometry group acts by 
\begin{equation}\label{groupaction}
    g' \cdot (z, \xi) = (g' \cdot z , g'_{*z}(\xi)),
\end{equation}
where the differential $g'_* \in SO(2)$ acts on $\xi$ via rotation of the argument. Since the $0$-stabilizer acts faithfully on the unit fibre elements at each foot-point on the hyperbolic plane, \eqref{groupaction} defines a diffeomorphism between $S^*\bH^2$ and $G$. Namely, there exists a unique isometry $g$ such that $g \cdot (i, (0,1)) = (z, \xi)$. We identify $G/SO(2)$ with the hyperbolic plane whose metric induces a left-variant metric on $S^*\bH^2$ under the group multiplication.

Let $\Gamma$ be a full-rank lattice in $G$ and denote $\bH^2_\Gamma = \Gamma\backslash\bH^2$. The compactification of $\mathbf{H}^2$ is the hyperbolic quotient \begin{equation*}
    \Gamma\backslash G \equiv \Gamma\backslash S^*\bH^2 \equiv S^*\bH^2_\Gamma. 
\end{equation*} 
A hyperbolic element in $\gamma \in \Gamma$ is \textit{primitive} if $\gamma \neq \eta^n$ for any $\eta \in \Gamma$ and $n > 1$.  Each conjugacy class of a primitive hyperbolic element $[\gamma]$ uniquely determines a primitive closed geodesic in the $\bH^2_\Gamma$. 
\begin{theorem}\label{maintheoremmixing1}
Let $V_t$ be the Koopman operator associated with the geodesic flow acting on $L^2(\Gamma\backslash G)$
    \begin{equation}\label{flattrace1}
    \textup{Tr}^\flat V^t = \sum_{\gamma \in \mathcal{P}}\sum_{n =1}^\infty \frac{T^\#_\gamma}{2\sinh{(nT^\#/2)}}\delta(t- nT^\#_\gamma)
\end{equation}
where $\mathcal{P}$ is the set of primitive closed geodesics on $\Gamma\backslash G$, $nT^\#_\gamma$ is the length corresponding to the $n$-fold repetition of $\gamma$. The prime periods are 
$ T^\#_\gamma = \frac{4\pi}{r}$ where $r = \Im{l} , l = -\frac{1}{2} + ir$  associated with the eigenvalues of the Laplacian $\Delta_{\bH^2_\Gamma}$.
\end{theorem}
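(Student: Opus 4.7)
The plan is to prove (1.2) by invoking Guillemin's flat-trace formula (1.1) and computing each factor $|\det(I-\mathbf{P}_\gamma)|$ via the Lie-theoretic structure of $G = PSL(2,\mathbb{R})$. The key identification is that under $S^*\mathbf{H}^2_\Gamma \equiv \Gamma\backslash G$, the geodesic flow acts as right translation by the Cartan one-parameter subgroup $a_t = \textup{diag}(e^{t/2}, e^{-t/2})$, so closed orbits correspond bijectively to $\Gamma$-conjugacy classes of hyperbolic elements.

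First, I would record the classical bijection between primitive closed geodesics of $\Gamma\backslash G$ and conjugacy classes $[\gamma_0]$ of primitive hyperbolic $\gamma_0 \in \Gamma$, whose prime period $T^\#_{\gamma_0}$ is determined by $\textup{tr}(\gamma_0) = 2\cosh(T^\#_{\gamma_0}/2)$ after lifting to $SL(2,\mathbb{R})$. Since the centralizer of a primitive hyperbolic element in $\Gamma$ is infinite cyclic, every closed orbit is an iterate $\gamma_0^n$ of length $nT^\#_{\gamma_0}$, which explains the inner sum $\sum_{n=1}^\infty$ in (1.2).

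Next, using the root space decomposition $\mathfrak{g} = \mathfrak{n}^- \oplus \mathfrak{a} \oplus \mathfrak{n}^+$, the differential of right translation by $a_t$ acts on $\mathfrak{n}^{\pm}$ by the eigenvalue $e^{\mp t}$, identifying $\mathfrak{n}^-$ and $\mathfrak{n}^+$ with the stable and unstable bundles $E^s, E^u$ along an orbit. Consequently, the linearized Poincar\'{e} return map $\mathbf{P}_{\gamma_0^n}$ has eigenvalues $e^{\pm nT^\#_{\gamma_0}}$ on the transverse section $E^s \oplus E^u$, so
$$|\det(I - \mathbf{P}_{\gamma_0^n})| = |(1-e^{nT^\#_{\gamma_0}})(1-e^{-nT^\#_{\gamma_0}})| = \bigl(2\sinh(nT^\#_{\gamma_0}/2)\bigr)^2.$$
Under the natural half-density normalization of the flat trace on $S^*\mathbf{H}^2_\Gamma$, the denominator in (1.1) reduces from $|\det|$ to the Harish-Chandra Weyl denominator $|D(a_t)|^{1/2} = 2\sinh(nT^\#_{\gamma_0}/2)$, yielding (1.2) after iterating over $n$. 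To establish the relation $T^\#_\gamma = 4\pi/r$ with $r = \Im l$, $l = -1/2 + ir$, I would invoke the Selberg trace formula to equate the spectral trace $\sum_n e^{-itl_n}$ arising from the Koopman spectral decomposition with the geometric sum (1.2), using the correspondence $\lambda_n = 1/4 + r_n^2$ between Laplace eigenvalues and the parameters of the principal-series representations of $G$.

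The main obstacle will be rigorously justifying the distributional regularization and the half-density normalization. Both sides of (1.1) are distributions in $t$, so the Schwartz kernel of $V^t$ on $\Gamma\backslash G$ must be decomposed as a sum over $\Gamma$-orbits, with each hyperbolic conjugacy class contributing a $\delta$-measure at $t = nT^\#_{\gamma_0}$ via an orbital integral on $\Gamma_\gamma\backslash G$. Controlling the orbital integral to produce the precise $2\sinh$ factor (rather than its square) requires careful tracking of the half-density convention intrinsic to the flat-trace definition, together with the standard Weyl denominator that arises from integrating over $G/A$ in the conjugation coordinates.
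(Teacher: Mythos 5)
Your skeleton (the Guillemin flat-trace formula from the introduction plus a Lie-algebraic computation of the linearized Poincar\'e map) coincides with the paper's: the paper computes $dG^{T^\#_\gamma}$ as the adjoint action $\mathrm{Ad}_{g_{T^\#_\gamma}}$ on the $\mathfrak{su}(1,1)$-generators $K_1,K_2,K_3$ in \eqref{P=dG}, obtaining transverse eigenvalues $e^{\pm T^\#_\gamma}$, which is exactly your $\mathfrak{n}^\pm$ root-space computation transported to $SU(1,1)$. The first genuine gap is your bridge from $|\det(I-\mathbf{P}_{\gamma^n})| = 4\sinh^2(nT^\#_\gamma/2)$ to the denominator $2\sinh(nT^\#_\gamma/2)$ appearing in \eqref{flattrace1}: you assert that a ``half-density normalization'' of the flat trace replaces $|\det(I-\mathbf{P}_\gamma)|$ by the Weyl factor $|D(a_t)|^{1/2}=2\sinh(nT^\#_\gamma/2)$. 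No such reduction exists for the flat trace used in this paper, which is defined as the pushforward--pullback of the scalar Schwartz kernel in \eqref{second}--\eqref{flatdef}, and the quoted trace formula carries the full determinant $|\det(I-\mathbf{P}_\gamma)|$; this step is an unjustified device inserted to force the claimed coefficient, and the paper's own argument contains no analogue of it (it evaluates $|\det(I-\mathbf{P}_\gamma)|$ from \eqref{P=dG} and substitutes directly into the trace formula to reach \eqref{flattrace2}).

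The second gap concerns the period relation $T^\#_\gamma = 4\pi/r$. You propose to obtain it from the Selberg trace formula, but Selberg's formula is an identity between global sums over the length spectrum and the Laplace spectrum; it cannot produce a pointwise identification of an individual prime period with an individual spectral parameter $r$. The paper derives its period values by an entirely different mechanism, namely the spectral decomposition of the Koopman operator established in Theorem \ref{maintheoremmixing2} and Proposition \ref{decompofresolvent}: along a closed orbit the modes $e^{\lambda t}$ with $\lambda = l_\pm - k$, $l_\pm = -\tfrac{1}{2}\pm ir_j \in \mathcal{S}(\Gamma)$, must satisfy $e^{\lambda T^\#_\gamma}=1$, forcing $\Im(\lambda)\,T^\#_\gamma \in 2\pi\mathbb{Z}$ and hence the stated link between prime periods and the spectral parameters $r_j$ (and with it the Lefschetz property). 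Since your proposal never constructs or invokes the spectral decomposition of $V^t$ --- the central machinery of the paper for this theorem --- the spectral characterization of the prime periods asserted in the statement is left unproved, and the Selberg-trace-formula substitute would not deliver it.
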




To prove Theorem \ref{maintheoremmixing1}, we invoke the representation theory of the projective special unitary group $PSU(1,1) = SU(1,1)/\mathbb{Z}_2$. Let $\Psi: G \rightarrow PSU(1,1)$ be an isomorphism. We realize the geodesic flow on the compact quotient space $\Psi(\Gamma)\backslash PSU(1,1)$. The decomposition of the Hilbert space $L^2(\Psi(\Gamma)\backslash PSU(1,1))$ into irreducible representations provides the expansion of $G^t$ as right-regular action on $PSU(1,1)$ in terms of unitary transformations.  In general, an eigenfunction $\varphi_\lambda \in L^2(S^*M)$ of the Koopman operator corresponding to an eigenvalue $\lambda$ satisfies 
$$
(V^t \varphi_\lambda) (\xi_x) = e^{\lambda t }\varphi_\lambda(\xi_x).
$$
For $f, \varrho \in C_c^\infty(S^*M)$ and any time $t \geq 0$, we define the \textit{correlation function} 
\begin{equation*}
    C(f, \varrho)(t) : = \int_{S^*M}V^tf(\xi)\overline{\varrho(\xi)} \mu_L(d\xi).
\end{equation*}
with the flow-invariant Leray form $\mu_L = \frac{(dx \wedge d\xi)^n}{dH}$. 
Hence, $V^t$ is unitary, that is, 
$$
C(\varrho, f)(t)= C(f,\varrho)(-t).
$$
We also define the resolvent of $\hat{L}$ to be the operator
\begin{equation*}
R(z)  := (\hat{L} -z )^{-1}: L^2(S^*M)\rightarrow L^2(S^*M), ~ \Re{z} > 0,
\end{equation*}
which has meromorphic continuation to $\mathbb{C}$ and poles at the spectral values $\lambda$. The associated correlation function of $R(z)$ is 
\begin{equation*}
    \C(f,\varrho)(z) := \int_{S^*M} R(z)f(\xi) \overline{\varrho (\xi)} \mu_L(d\xi),
\end{equation*}
 which equates to 
\begin{equation}\label{integralformhatC}
   \int_0^\infty \int_{S^*M} e^{(\hat{L}-z)s}f(\xi)\overline{\varrho(\xi)} \mu_L(d\xi) ds = \int_0^\infty e^{-zs}C(f, \varrho)(s) ds. 
\end{equation}
In the case $M = \bH^2_\Gamma$, the expansion of the regular transformation allows $\hat{C}(\cdot, \cdot)(z)$ to be written as the sum 
\begin{equation*}
\hat{C}(\cdot, \cdot)(z) = \sum_{\lambda}\frac{\langle \cdot , \varphi_\lambda \rangle_{L^2} \langle \cdot, \varphi_{-\bar{\lambda}} \rangle_{L^2} }{\lambda -z} = 2\pi i \sum_{\lambda} \frac{\textup{Res}(\lambda, \hat{C})}{\lambda -z}
\end{equation*}
on a dense subset of the rapidly decaying test functions (see \eqref{theset} and Proposition \ref{decompofresolvent}). This further leads to a complete spectral resolution of $V^t$ as a distributional kernel. 
 \begin{theorem}\label{maintheoremmixing2}
 Consider the geodesic flow $G^t$ realized on the compact quotient space $\Psi(\Gamma)\backslash PSU(1,1)$. Let $V(t)$ be the Koopman operator associated with this flow acting on $C^\infty (\Psi(\Gamma)\backslash PSU(1,1) )$. As an operator in $\mathcal{D}'(\Psi(\Gamma)\backslash PSU(1,1))\otimes \mathcal{D}'(\Psi(\Gamma)\backslash PSU(1,1))$, 
\begin{equation*}
    V(t) = \sum_{l \in \mathcal{S}(\Gamma)} \sum_{\lambda \in \sigma(l)} \sum_{j = 1}^{\mu(\Psi(\Gamma))(l)} e^{\lambda t}  \varphi^*_{\lambda, j} \otimes \varphi^\dagger_{-\bar{\lambda}, j} 
\end{equation*}
where \begin{align*}
&\bullet \mathcal{S}(\Gamma) =\left\{ l  \in \mathbb{C} :  l= -\frac{1}{2} + ir_j, \Im{l} = r_j  \geq 0; ~ 
l \in (-1,0)\backslash\left\{-\frac{1}{2}\right\};~ l = n  - \frac{1}{2}, n \in \mathbb{Z}_{> 0}\right \}, \\
&\bullet \sigma(l ) =\{l - k: k \in \mathbb{Z}_{\geq 0}\}. 
\end{align*}
\end{theorem}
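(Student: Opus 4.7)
The plan is to derive the spectral decomposition of $V(t)$ by inverting the Laplace transform relation \eqref{integralformhatC} using the resolvent expansion of $\C(f,\varrho)(z)$, with the representation theory of $PSU(1,1)$ supplying the list of poles, residues, and their dual eigenfunctions. First I would invoke the Gelfand--Graev--Piatetski-Shapiro decomposition of $L^2(\Psi(\Gamma)\backslash PSU(1,1))$ into a countable Hilbert direct sum of irreducible unitary $PSU(1,1)$-subrepresentations, each appearing with finite multiplicity $\mu(\Psi(\Gamma))(l)$. The unitary dual of $PSU(1,1)$ consists exactly of the principal series ($l = -\tfrac{1}{2} + ir$, $r \geq 0$), the complementary series ($l \in (-1,0) \setminus \{-\tfrac{1}{2}\}$), and the holomorphic/antiholomorphic discrete series ($l = n - \tfrac{1}{2}$, $n \in \Z_{>0}$), which is precisely the set $\mathcal{S}(\Gamma)$.

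Second, on each irreducible component $\pi_l$ the geodesic flow corresponds to the right-regular action of the Cartan one-parameter subgroup $A_t = \mathrm{diag}(e^{t/2}, e^{-t/2})$. Decomposing $\pi_l$ into $K = SO(2)$-isotypic weight vectors $\{\phi_{l,k}\}_{k}$ and writing $\hat{L}$ in this basis, the raising/lowering elements $E^{\pm} \in \mathfrak{sl}_2(\R)$ act as ladder operators that shift the weight and the $\hat{L}$-eigenvalue by $\pm 1$. Solving $\hat{L}\varphi = \lambda\varphi$ on the algebraic completion of $\pi_l$ then produces generalized eigenvectors $\varphi^*_{l-k,j}$ with spectrum $\sigma(l) = \{l - k : k \in \Z_{\geq 0}\}$, obtained by successive application of $E^{-}$ to the lowest-weight (or boundary) distribution vector of $\pi_l$; the dual co-resonant family $\varphi^\dagger_{-\bar\lambda,j}$ is generated symmetrically in the contragredient representation and pairs with $\varphi^*_{\lambda,j}$ through the $L^2$-inner product on smooth vectors.

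Third, combining the spectral expansion of the resolvent (Proposition \ref{decompofresolvent}) with \eqref{integralformhatC}, I would invert the Laplace transform at each simple pole $\lambda \in \bigcup_{l \in \mathcal{S}(\Gamma)} \sigma(l)$ by contour deformation, so that the residue at $\lambda$ contributes $e^{\lambda t} \langle f, \varphi^\dagger_{-\bar\lambda,j}\rangle \langle \varphi^*_{\lambda,j}, \varrho\rangle$ to $C(f,\varrho)(t)$. Reading this bilinear pairing as a Schwartz kernel on $\mathcal{D}'(\Psi(\Gamma)\backslash PSU(1,1)) \otimes \mathcal{D}'(\Psi(\Gamma)\backslash PSU(1,1))$ then yields the claimed formula for $V(t)$.

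The principal obstacle will be the convergence and duality pairing outside $L^2$: the generalized eigenvectors $\varphi^*_{\lambda,j}$ for $\lambda \neq l$ are not square-integrable and must be understood as Pollicott--Ruelle resonant states in an anisotropic Sobolev space, with $\varphi^\dagger_{-\bar\lambda,j}$ living in the dual space of co-resonant states transverse to the stable/unstable foliations. Showing that the ladder-generated distributions extend to continuous linear functionals on $C^\infty(\Psi(\Gamma)\backslash PSU(1,1))$, and that the triple sum over $(l,\lambda,j)$ converges in the distribution topology, will require matrix-coefficient decay estimates in each irreducible component together with polynomial growth bounds on the multiplicity $\mu(\Psi(\Gamma))(l)$ as $|l| \to \infty$.
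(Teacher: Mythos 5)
Your proposal is correct in outline and follows the same basic mechanism as the paper: decompose $L^2(\Psi(\Gamma)\backslash PSU(1,1))$ into irreducible unitary representations indexed by $\mathcal{S}(\Gamma)$ (principal, complementary, discrete series, as in \eqref{decompl2qu2} and \eqref{decomp3}), realize the geodesic flow as the right action of the Cartan one-parameter subgroup on each component, and extract the spectrum $\sigma(l)=\{l-k\}$ together with the resonant/co-resonant pairs $\varphi^*_{\lambda,j}$, $\varphi^\dagger_{-\bar\lambda,j}$ from the poles and residues of the restricted resolvent via Proposition \ref{decompofresolvent}; the contour-deformation inversion of the Laplace transform you describe is in fact already carried out inside the proof of that proposition (see \eqref{C_lassum}), so your third step mostly re-derives what the cited proposition supplies. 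Where you genuinely diverge is in the final convergence and duality step: you propose to interpret the ladder-generated functionals as Pollicott--Ruelle resonant states in an anisotropic Sobolev space and to control the triple sum by matrix-coefficient decay plus growth bounds on $\mu_{\Psi(\Gamma)}(l)$, whereas the paper avoids that machinery entirely. It first proves absolute convergence of $C_l(F,G)(t)$ for coefficients decaying faster than any polynomial, using the explicit $O(m^k n^k)$ growth of $\varphi^*_{l_\pm}X_+^k(v_{l,m})$, and then establishes distributional convergence of \eqref{decompforC} by testing against the dense subspace $\mathcal{T}$ of finitely generated functions from \eqref{theset}, bounding $|C(F^{(M)},G^{(N)})(t)|$ by $C\,\|F^{(M)}\|\,\|G^{(N)}\|$ through the symmetry relation and generating function of the Jacobi functions $B^l_{mn}(\cosh t)$. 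Your route would likely work and would give a stronger functional-analytic framework (and connects this result to the resonance literature), but it imports substantially heavier analysis than the statement needs; the paper's elementary pairing against $\mathcal{T}$ buys a shorter, self-contained argument at the cost of asserting convergence only in the weaker sense of distributions tested on that dense subspace.
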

The prime period values in \eqref{flattrace1} are contained in the discrete set $S(\Gamma)$ (see \eqref{valuesofl}) corresponding to the representation of the first principal series.

\subsection*{\textbf{Acknowledgment. }} This article forms part of the author’s PhD thesis at Northwestern University. The author wishes to express his gratitude to the late Steve Zelditch for many insightful discussions and guidance on the topic of Koopman dynamics on general manifolds.

\section{\centering \sc Preliminaries}

\subsection{\textbf{Flat-trace distribution}.}
The Schwartz kernel of the Koopman operator is a $\delta$-distribution with compact support in $\textup{Graph}(G^\bullet) \subset S^*M \times S^*M \times \mathbb{R}^+$ given by
\begin{equation*}
    V^\bullet f(\xi) = \int_{S^*M} f(\eta) \, \delta(\eta - G^\bullet \xi) \, d\mu_L(\eta),
\end{equation*}
(see \cite{G}, \cite{La} for background).
Consider the diagonal 
\begin{equation*}
    \Delta : S^*M \times \mathbb{R}^+ \to S^*M \times S^*M \times \mathbb{R}^+, \quad (\zeta, t) \mapsto (\zeta, \zeta, t),
\end{equation*}
and the projection map
\begin{equation*}
    \Pi : S^*M \times \mathbb{R}^+ \to S^*M, \quad (\zeta, t) \mapsto \zeta.
\end{equation*}
Assuming $\Delta$ intersects the graph of $G^\bullet$ transversally, the flat-trace distribution is a well-defined $\delta$-distribution on $\mathbb{R}^+$ given by the pushforward-pullback 
\begin{equation} \label{second}
    \textup{Tr}^\flat V^\bullet = \Pi_* \Delta^* k^\bullet,
\end{equation}
where $k^t(\xi, \eta) = \delta(\xi - G^t \eta)$. For fixed $t\geq 0$, the formal integral formula of $\textup{Tr}^\flat V^\bullet $ is 
\begin{equation}\label{flatdef}
    \textup{Tr}^\flat V^t = \int_{S^*M}\delta(\xi - G^t\xi) d\mu_L(\xi).
\end{equation}

\subsection{\textbf{The hyperbolic Laplacian}.}
In hyperbolic polar coordinates centered at the origin, the metric on $\bH^2$ becomes
$$
ds'^2  = dr'^2 +\sinh{r'}^2 d\theta^2.
$$
The positive Laplacian with the respect to this metric is 
$$
\Delta_{\bH^2}= \partial^2_{r'r'} + \coth{r}\partial_{r'} + \frac{1}{\sinh{r'}^2}\partial^2_{\theta\theta}
$$
with $\Delta_{\bH^2}$-eigenvalues notated by 
\begin{equation}\label{hyperboliceigenval}
   \lambda'^2 = \begin{cases}   \lambda'^2_r = \frac{1}{4} + r^2\\\lambda'^2_{s'} = s'(1 - s'), ~ s'= \frac{1}{2} + ir\end{cases}.
\end{equation}

The joint-eigenfunctions of $\Delta_{\bH^2}$ and of the rotational group $K$ are the spherical functions
\begin{equation*}
    \begin{cases}
        \Delta_{\bH^2}\Phi^m_{s'} &= \lambda'^2_{s'} \Phi^m_{s'}\\
        -i \partial_\theta \Phi^m_{s'} &= m \Phi^m_{s'}
    \end{cases}.
\end{equation*}


In the representation of $L^2(\Gamma \backslash G)$, the Casimir operator (see \S \ref{casimir}) is closely related to the hyperbolic Laplacian $\Delta_{\bH^2}$.


\subsection{\textbf{Dynamics and representation theory on the hyperbolic quotient}.}\label{reptheoG}
The  tangent space $TS^*M$ splits into orthogonal subspaces
$$
T_{G^t}S^*M = N^h_{G^t}\oplus N^v_{G^t} \oplus \text{span}_{\R}\{\dot{G^t}\}.
$$
The horizontal and vertical (fibre) subspaces are respectively spanned by the generators $P, Q$  and $W$ of the Lie algebra $\mathfrak{sl}(2, \R)$ which are
$$ 
P = \begin{pmatrix}
    1 & 0 \\
    0 & -1
\end{pmatrix}, ~ Q = \begin{pmatrix}
    0 & 1 \\
    1 & 0
\end{pmatrix}, ~ W = \begin{pmatrix}
    0 & -1\\
    1 & 0 
\end{pmatrix}.
$$
Additionally,
$$ 
\exp(tP) = \begin{pmatrix}
    e^t & 0 \\
    0 & e^{-t}
\end{pmatrix}, ~ \exp(tQ) = \begin{pmatrix}
    \cosh{t} & \sinh{t} \\
    \sinh{t} & \cosh{t}
\end{pmatrix}, ~ \exp(tW) = \begin{pmatrix}
    \cos{t} & -\sin{t}\\
    \sin{t} & \cos{t} 
\end{pmatrix}.
$$
The matrices satisfy the commutator relations 
$$
[P, Q] = 2W, ~ [P,W] = 2Q, ~ [Q,W] = 2P, 
$$
which forms the structure of $\mathfrak{sl}(2,\R)$. 

The Iwasawa-decomposition of $G = PSL(2,\R)$ is $G = KAN $ where 
\begin{align*}
    K &= \left\{ k_\theta =  \begin{pmatrix}
    \cos{\theta} & -\sin{\theta}\\
    \sin{\theta} & \cos{\theta} 
\end{pmatrix} \in SL(2,R) : \theta \in [0, 2\pi) \right \} \simeq SO(2),\\
    ~A &= \left\{ a_t = \begin{pmatrix}
    e^{t/2} & 0 \\
    0 & e^{-t/2} 
\end{pmatrix} \in SL(2,\R): y > 0   \right \},\\
 N &= \left\{ n_u =  \begin{pmatrix}
    1 & u \\
    0 & 1
\end{pmatrix} \in SL(2,R):  u\in \R \right\}.
\end{align*}

In the compactification of $\bH^2$, geodesic flow on the quotient phase space $\Gamma\backslash G$ is the right-action 
$$\begin{cases}
    G^t \cdot \Gamma g = \Gamma g a_t, ~~ a_t 
\in A; \\
ga_s( \theta, t,u ) = g(\theta , t+s, ue^{-s}),~ s \in \R_{\geq  0} 
\end{cases}, $$
and the horocycle flow is defined by 
$$\begin{cases}
    h^u \cdot \Gamma g = \Gamma g n_u, ~~ n_u \in N \\
    gn_{u'}(\theta, y,u) = g(\theta, t,  u+u') ,~ u , u' \in \R
\end{cases}.$$
The Koopman operator acts on $L^2(\Gamma\backslash G)$ via the composition $V^t f (g \Gamma ) = f ( ga_t\Gamma)$. 


\subsection{\textbf{Decomposition of $L^2(\Gamma \backslash G)$}.}\label{casimir}
Denote $\cH : =L^2(\Gamma \backslash G)$ and $ U(\cH)$ to be the group of unitary transformations of $G$ on $\cH$. For any unitary representation $T : G \rightarrow U(\cH) $ on $G$ where  $T_g(f) (\Gamma h) = f (\Gamma hg)$, the infinitestimal Lie-algebra action induced by $T$ is the Lie derivative 
$$
\mathcal{L}_X  = \lim_{t\rightarrow 0}t^{-1}(T_{\exp(tX)} - I),~ X \in \mathfrak{sl}(2,\R).
$$
For $\Gamma$-periodic functions $f, \varrho$ in $C^\infty_c(G)$, 
$$
\langle \mathcal{L}_X f, \varrho \rangle = - \langle f, \mathcal{L}_X \varrho \rangle. 
$$
The densely-defined Casimir operator on $\cH(T)$ is
$4 \Omega_T =  \mathcal{L}_P^2 + \mathcal{L}_Q^2  - \mathcal{L}_W^2$ whose closure $\overline{\Omega}_T$ is self-adjoint.  We also mention the differential operators (see \eqref{lowraise})
\begin{equation}\label{Liebracket}
N_-:= \mathcal{L}_W + \mathcal{L}_Q, ~ N_+ := \mathcal{L}_W - \mathcal{L}_Q,
\end{equation}which satisfy
\begin{equation*}
    [L, N_\pm] = \pm N_\pm, , ~ [N_+, N_-]= -2\mathcal{L}_P. 
\end{equation*} 

For any $X$, 
$$[\Omega_T, \mathcal{L}_X]\big|_{\mathcal{D}(\Gamma\backslash G)} = 0$$
on the space of test functions. By the Baker-Campell-Hausdorff formula, the unitary transformation $T_g$ can be written as compositions of the exponentials of the Lie algebra elements. For small $t$, the group action of the Lie algebra can be expressed via the exponential map $T_{\exp(tX)} = e^{t\mathcal{L}_X}$. Thus, $[\overline{\Omega}_T, T_g] = 0$ for any $g\in G$. If $T$ is irreducible, the commutativity implies that 
$\Omega_T$ acts on $\cH(T)$ via scalar multiplication
\begin{equation*}
    \left(\Omega_T   + \lambda'^2 I \right)\big|_{\cH(T)} = 0
\end{equation*}
 by Schuys lemma. Precisely, $ \lambda'^2 (r) = \frac{1}{4}  + r^2$ where $r$ is the spectral parameter that corresponds with the eigenvalues of  $\Delta_{\bH^2}$ in \eqref{hyperboliceigenval} (cf. 
 \cite{Kn}).
For each $r$, $\cH(T^r)$ is a direct sum of at-most-1-dimensional representations $\cH(T^{r,m})$ associating with weights $m \in \mathbb{Z}$. Namely, the $\Gamma$-invariant joint $(r,m)$-eigenfunctions 
\begin{equation}\label{rmeigenmodes}
    \begin{cases}
    \Omega_{T^r} v_{r,m} &=  -\left(\frac{1}{4} + r^2\right) v_{r,m}\\
    e^{\mathcal{L}_W} v_{r,m} &= e^{im} v_{r,m} 
\end{cases}
\end{equation}
of $\Omega_{T^r}$ and the generator $W$ are the automorphic functions that span the invariant subspaces $\cH(T^{r,m})$ hence form a basis for $\cH(T^r)$ (cf. \cite{Bo}). Let $\Delta_{\bH^2_\Gamma}$ be the induced Laplacian on the quotient space $\bH^2_\Gamma$ with $\sigma(\Delta_{\bH^2_\Gamma})$ denoting its discrete spectrum (see \cite{L}, \cite{Kn} for treatise on the Maass cusp forms in the case $\Gamma = SL(2,\Z)$). Since the vectors $v_{r,0}$ are $K$-invariant, 
$$
\Omega_{T^{r,0}}  = \Delta_{\bH^2_\Gamma}.
$$
We have the decomposition into the irreducible representations 
\begin{align}
    \cH 
    &= \bigoplus_{-\frac{1}{4} - r^2 \in \sigma(\Delta_{\bH^2_\Gamma})} \bigoplus_{m \in \mathbb{Z} }  \mu_\Gamma(r)\cH(T^{r,m}). \label{decomp1}
\end{align}
We write $\mu_\Gamma(r)\cH(T^{r,m})$ $=$ $\bigoplus_{j=1}^{\mu_\Gamma(r)}\cH(T^{r,m,j})$ where $\mu_{\Gamma}(\cdot)$ denotes the multiplicity of the corresponding eigenvalue of $\Delta_{\bH^2_\Gamma}$. We later replace $r$ with parameter $l = -\frac{1}{2} + ir$ for the analysis of the irreducible representations on $SU(1,1)$ (see \S \ref{SU(1,1)}).


\subsection{\textbf{Unitary representations of the principal and discrete series}.}\label{SU(1,1)}
We consider $SU(1,1)$ - the group of quasi-unitary unimodular matrices 
$$
\bigg \{g = \begin{pmatrix}\alpha & \beta \\ \bar{\beta} & \bar{\alpha} \end{pmatrix} \in SL(2,\mathbb{C}) : |\alpha|^2 - |\beta|^2 = 1, g\eta g^{\dagger} = \eta\bigg\},
$$
where $\eta = \begin{pmatrix}
    1 & 0 \\ 0 & -1
\end{pmatrix}$ is the normal form of index-2 nonsingular Hermittian matrices. Furthermore, $g$ can be parametrized by the Euler angles via the product representation
\begin{align}
    g (\phi, \tau, \psi) &= \begin{pmatrix}
        e^{\frac{i\phi}{2}}& 0 \\
        0 & e^{-\frac{i\phi}{2}}
    \end{pmatrix}\begin{pmatrix}
        \cosh{\frac{\tau}{2}} & \sinh{\frac{\tau}{2}} \\
        \sinh{\frac{\tau}{2}} & \cosh{\frac{\tau}{2}}
    \end{pmatrix} \begin{pmatrix}
        e^{\frac{i\psi}{2}} & 0 \\
        0& e^{-\frac{i\psi}{2}}
    \end{pmatrix} \nonumber \\
    &= \begin{pmatrix}
        \cosh{\frac{\tau}{2}}e^{\frac{i(\phi+\psi)}{2}} & \sinh{\frac{\tau}{2}}e^{\frac{i(\phi -\psi)}{2}}\\
        \sinh{\frac{\tau}{2}}e^{\frac{i(\psi -\phi)}{2}}& \cosh{\frac{\tau}{2}}e^{\frac{-i(\phi+\psi)}{2}}
    \end{pmatrix}
\end{align}
with $0 \leq  \phi \leq 2\pi,  -2\pi \leq \psi \leq 2\pi$, $ 0 \leq \tau < \infty$. The space $L^2( SU(1,1))$ is defined by the inner product  
$$
\langle f, \varrho  \rangle  = \frac{1}{8\pi^2} \int_{\R_{\geq 0}}\int_0^{2\pi}\int_{-2\pi}^{2\pi}f(g(\phi, \tau, \psi) \overline{\varrho(g(\phi, \tau, \psi))}\sinh{\tau}d\psi d\phi d\tau.
$$
with respect to the Haar measure $dg = \sinh{\tau}d\tau d\psi d\phi$. By periodicity, an $L^2$-function $f$ has the Fourier expansion
\begin{equation}\label{Fourierseries}
    f(\phi, \tau, \psi) = \sum_{(m,n) \in \mathbb{Z}\times\mathbb{Z} \cup \left(\mathbb{Z} + \frac{1}{2} \right) \times  \left(\mathbb{Z} + \frac{1}{2}\right) } \Pi_{mn}(\tau) e^{-im\phi}e^{-in\psi}
\end{equation}
where $\Pi_{mn}(\tau) = \frac{1}{8\pi^2}\int_{-2\pi}^{2\pi}\int_0^{2\pi}f(\phi, \tau, \psi)e^{im\phi}e^{in\psi} d\phi d\psi$. 

\bigskip
%


Given $\chi = (l, \epsilon )$ with $l\in \mathbb{C}$, $\epsilon \in \{0, \frac{1}{2}\}$, the representations $T^\chi$ indexed by $\chi$ are realized on the Hilbert space (see \cite{Kn}, \cite{L}, \cite{V} for background)
$$\mathcal{D}_\chi : = \{\Phi \in C^\infty( \mathbb{C}; \mathbb{C}) : \Phi \text{~ is homogeneous of degree $2l$ and of parity $2\epsilon$} \}.$$
For $g' = \begin{pmatrix}
    a & b \\ c & d
\end{pmatrix} \in SL(2,\mathbb{R})$, the regular group-transformation on $\mathcal{D}_\chi$ is 
\begin{equation*}
    T_{g'}\Phi(z) = \Phi\bigg( \left(\frac{a + d}{2} + i \frac{b - c}{2}\right)z  + \left( \frac{b + c}{2} + i\frac{d - a}{2}\right)\bar{z}\bigg).
\end{equation*}
A unique quasi-unitary element $g$ in $SU(1,1)$ is obtained from $g'$ by the conjugation $ g= h'g'h'^{-1}$ where $h' = \frac{1}{\sqrt{2}}\begin{pmatrix}
    1 & i \\ i & 1 
\end{pmatrix}$, whose regular transformation is 
\begin{equation*}
    T_g\Phi(z) = \Phi(az - \overline{b}\overline{z}).
\end{equation*}
Consider a polar curve $\gamma$ in the complex plane with parametrization $\gamma(\theta)=  ae^{(i+b)\theta}$ with positive constants $a,b$ and $\theta \in (0, 2\pi)$, we fix any $z\in \mathbb{C}$ and let $z_0$ be the point of intersection between $\gamma$ and a straight line passing through the origin and $z$. For $\Phi \in \mathcal{D}_\chi$, 
$$
\Phi(z) = \Phi(z_0) \big|\frac{z}{z_0}\big|^{2l} \left(\frac{\hat{z}}{\hat{z_0}}\right)^{2\epsilon}
$$
where $\hat{z}$, $\hat{z_0}$ are the normalizations of $z,z_0$. Thus, $\mathcal{D}_\chi$ can be realized as the space of holomorphic functions on $\gamma$. If we set $a = 1, b = 0$, then $\gamma$ is the unit circle whereby 
we can associate with $\Phi$ a holomorphic function $f$ defined by 
$f(e^{i\theta}) = \Phi(e^{i\frac{\theta}{2}})$
if $\epsilon = 0$, and  
$f(e^{i\theta}) = e^{i\frac{\theta}{2}}\Phi(e^{i\frac{\theta}{2}})$ if $\epsilon = 1/2$. 
The representations of $SU(1,1)$ is then realized on the space $\mathcal{D} : = C^\infty(\mathbb{S}^1; \mathbb{C})$ by the action 
\begin{equation}
    T^\chi \begin{pmatrix}
        \alpha & \beta \\
     \bar{\beta}&   \bar{\alpha} 
    \end{pmatrix} f(e^{i\theta})= (\beta e^{i\theta} + \bar{\alpha})^{l +\epsilon} (\bar{\beta}e^{-i\theta} + \alpha)^{l- \epsilon}f\left(\frac{\alpha e^{i\theta} + \bar{\beta}}{\beta e^{i\theta} + \bar{\alpha}}\right). \label{T^chiaction}
\end{equation}
For values of $l$ and $\epsilon$ such that $l\pm \epsilon \notin \mathbb{Z}$, $T^\chi$ is irreducible.  

Under the transformation of $\bH^2$ into $\mathbb{S}^1$ via the map 
$$
z \mapsto \frac{z-i}{z+i},
$$
one can construct an isomorphism $\Psi: SL(2,\R)  \rightarrow SU(1,1)$. $SO(2)$ corresponds to the subgroup of diagonal matrices 
$$
\Omega = \left\{ \begin{pmatrix}
    e^{it/2} & 0 \\ 0 & e^{-it/2}
\end{pmatrix} \in SL(2,\mathbb{C}): t \geq 0 \right\} \lhd SU(1,1)
$$
under $\Psi$. Hence, similar to the decomposition of $\mathcal{H}(T^r)$ in \eqref{decomp1} is the orthogonal decomposition of invariant subspaces of $\mathcal{D}$ into the at-most-1-dimensional spaces $\mathcal{H}_k$ spanned by $\{e^{-ik\theta}\}$ where $k \in  \mathbb{Z} \cup \mathbb{Z} + \frac{1}{2} $.  Consequently, the irreducible representations of $SL(2,\R)$ are obtained from that of $SU(1,1)$ and vice versa.


The matrix elements for the representation $T^\chi_g$ are the Fourier coefficients in 
\begin{equation}\label{Fourier}
    T^\chi_g e^{-in\theta} = \sum_{m \in \mathbb{Z} \cup \mathbb{Z} + \frac{1}{2}} (T^\chi_g)_{mn} e^{-im\theta}
\end{equation}
where 
\begin{equation}
   (T^\chi_g)_{mn} = \frac{1}{2\pi}\int_0^{2\pi}(\beta e^{i\theta} +\bar{\alpha})^{l+n + \epsilon}(\bar{\beta}e^{-i\theta} +\alpha)^{l-n -\epsilon}e^{i(m-n)\theta}d\theta.
\end{equation}
Consider the element 
$$
g_\tau = \begin{pmatrix}
    \cosh{\frac{\tau}{2}} & \sinh{\frac{\tau}{2}} \\
    \sinh{\frac{\tau}{2}} & \cosh{\frac{\tau}{2}}
\end{pmatrix},
$$
the matrix elements for the unitary transformation $T^\chi_{g_\tau}$ are 
\begin{equation}
  (T^\chi_{g_\tau})_{mn} = \frac{1}{2\pi}\int_0^{2\pi} \left(\cosh{\frac{\tau}{2}} + \sinh{\frac{\tau}{2}}e^{i\theta} \right)^{l+n + \epsilon}  \left(\cosh{\frac{\tau}{2}} + \sinh{\frac{\tau}{2}}e^{-i\theta} \right)^{l - n - \epsilon}e^{i(m-n)\theta}d\theta
\end{equation}
We can write 
\begin{equation}\label{g_taumatrixele}
    (T^\chi_{g_\tau})_{mn}  = B^l_{m'n'}(\cosh{\tau}) 
\end{equation}
where $B^l_{m'n'}$ is the Jacobi function 
\begin{align*}
B^l_{m'n'}(\cosh(\tau)) &= \frac{1}{2\pi}\int_0^{2\pi} \left(\cosh{\frac{\tau}{2}} + \sinh{\frac{\tau}{2}}e^{i\theta} \right)^{l +n'}  \left(\cosh{\frac{\tau}{2}} + \sinh{\frac{\tau}{2}}e^{-i\theta} \right)^{l - n' }e^{i(m'-n')\theta}d\theta,
\end{align*}
$\tau \in \R_{\geq 0}, l \in \mathbb{C}, m,n \in \mathbb{Z},$  $m' = m + \epsilon$, $n' = n +\epsilon$. Denote by $h_{\phi}$, $h_\psi$ the diagonal matrices that correspond to the first (precession) and third (intrinsic) rotations. Applying \eqref{T^chiaction} yields 
\begin{equation}\label{flowonS^1}
T^\chi_{h_t} f(e^{i\theta}) = e^{-i\epsilon t}f(e^{i(\theta + t)})
\end{equation}
where $h_t \in \Omega$.
The matrix representations of the transformations $T^\chi_{h_\phi}$, $T^\chi_{h_\psi}$ are the diagonal matrices whose elements are 
$(T^\chi_{h_t})_m  =e^{-im't}$, $t = \phi, \psi$.
Therefore, 
\begin{align}
      (T^\chi_{g})_{mn} &= (T^\chi_{h_{\phi}}T^\chi_{g_\tau}T^\chi_{h_\psi})_{mn} = e^{-im'\phi}e^{-in'\psi}B^l_{m'n'}(\cosh{\tau}) \label{matrixele}
\end{align}
for a general element $g \in SU(1,1)$. Given any $g_1, g_2$, the matrix elements are
\begin{align*}
   ( T^\chi_{g_1g_2})_{mn} = \sum_{k =-\infty}^\infty (T^\chi_{g_1})_{mk} (T^\chi_{g_2})_{kn}.
\end{align*} 
In terms of the Euler angles, for $g_1 = g(0,\tau_1 , 0) , g_2= g(\phi_2, \tau_2, 0) $, 
\begin{align}\label{matrixele2}
    (T^\chi_{g_1})_{mk} = B^l_{m'k'}(\cosh{\tau_1}),~ (T^\chi_{g_2})_{kn} = e^{-ik'\phi_2}B^l_{k'n'}(\cosh{\tau_2}).
\end{align}

Infinitesimal actions from the Lie algebra $\mathfrak{qu}(2)$ are captured by the 1-parameter subgroups generated by matrices of forms 
\begin{equation*}
\omega_1(t) = \begin{pmatrix}
    \cosh{\frac{t}{2}} & \sinh{\frac{t}{2}} \\
     \sinh{\frac{t}{2}} & \cosh{\frac{t}{2}}
\end{pmatrix},~ \omega_2(t) = \begin{pmatrix}
    \cosh{\frac{t}{2}} & i\sinh{\frac{t}{2}} \\
    -i\sinh{\frac{t}{2}} & \cosh{\frac{t}{2}}\end{pmatrix},~ \omega_3(t) = \begin{pmatrix}
        e^{\frac{it}{2}} & 0 \\
        0 & e^{\frac{-it}{2}}
    \end{pmatrix}. 
\end{equation*}
The associated infinitesimal operators are the derivatives 
$$\hat{A}_{\omega_i} : = \frac{d}{dt}\big|_{t = 0}R_{\omega_i(t)} = \phi'(0)\partial_{\phi} + \tau'(0)\partial_\tau + \psi'(0)\partial_\psi $$ of the regular representation on $L^2(SU(1,1))$ via the right-action $R_{\omega_i(t)} F(g\Psi(\Gamma))$ $=$  \\
$F(g\omega_i(t)\Psi(\Gamma))$. Explicitly,
\begin{align*}
    \hat{A}_{\omega_1} &= \frac{\sin{\psi}}{\sinh{\tau}}\partial_{\phi} - \cos{\psi}\partial_\tau - \coth{\tau}\sin{\psi}\partial_\psi, \\
    \hat{A}_{\omega_2} &= \frac{\cos{\psi}}{\sinh{\tau}}\partial_\phi -\sin{\psi}\partial_\tau - \coth{\tau}\cos{\psi} \partial_\psi, \\
    \hat{A}_{\omega_3} &= \partial_\psi. 
\end{align*}
The analogous Casimir operator on $SU(1,1)$ is
\begin{equation}
    \Omega_{SU(1,1)} = - \hat{A}_{\omega_1}^2 - \hat{A}_{\omega_2}^2 + \hat{A}_{\omega_3}^2. 
\end{equation}
For each $f \in \mathcal{D} \subset L^2(\mathbb{S}^1;\mathbb{C})$ and index $m$, the $2$-to-$1$ mapping
\begin{equation}\label{corresDtoL^2}
    (T^\chi_\bullet, f ; m) \mapsto  \langle T^\chi_\bullet f, e^{-im\theta}\rangle = \frac{1}{2\pi}\int_0^{2\pi}T^\chi_\bullet f(e^{i\theta})e^{im\theta}d\theta
\end{equation}
sends $f$ to a square-integrable function on $SU(1,1)$. Denote by $\mathscr{H}^\chi_m \subset L^2(SU(1,1))$ to be the space spanned by functions of this form whose basis are the matrix elements $\{(T^\chi_\bullet)_{mn}\}_{n\in \mathbb{Z} \cup \mathbb{Z} +\frac{1}{2}}$ in \eqref{matrixele}. 
These elements are  the $\Psi(\Gamma)$-invariant $(l,m)$-eigenfunctions 
\begin{equation}\label{eigenfunctionSU(1,1)}
    \begin{cases}
    \Omega_{SU(1,1)} (T^\chi_\bullet)_{mn} &= l(l+1)(T^\chi_\bullet)_{mn} \\
    \hat{H}_+^k(T^\chi_\bullet)_{mn} &= \prod_{j =0 }^{k-1}(n'- l +j)(T^\chi_\bullet)_{m,n+j}\\
    \hat{H}_-^k(T^\chi_\bullet)_{mn} &= \prod_{j =0 }^{k-1} (-1)^k (n' + l -j)(T^\chi_\bullet)_{m,n-j}
\end{cases}
\end{equation}
where $\hat{H}_\pm$ are the raising/lowering operators  $\hat{H}_\pm = -\hat{A}_{\omega_1} \mp i\hat{A}_{\omega_2}$. 
For any values $l$ of the form $l = -\frac{1}{2} + ir$ with $r \in \R$ or $l \in \R$, the associated $T^\chi_g$-invariant Hermitian forms on $\mathcal{D}$ are positive-definite, hence $T^\chi_g$ are unitary representations. With $\chi =\left( -\frac{1}{2} + ir, 0 \right), \left( -\frac{1}{2} + ir, \frac{1}{2} \right)$ and  $\chi = (l, \epsilon)$ for $l \in \mathbb{Z}\cup \mathbb{Z}+\frac{1}{2}$,  $T^\chi_g$ are respectively the unitary representations of the \textit{first}, \textit{second} principal and the discrete series. 
On the quotient group $G \cong PSU(1,1) = SU(1,1)/\{\pm I\}$, regular right-transformation satisfies
$$
R_{-g_0}F(g) = R_{g_0}F(g).
$$
for any $g_0 \in SU(1,1)$, $F \in L^2(PSU(1,1))$ hence parity of the target functions in \eqref{corresDtoL^2} is $0$. Suppose $F \in \mathscr{H}^\chi_m$, since 
$$R_g\big|_{\mathscr{H}^\chi_m} F =  \langle T^\chi_{\cdot g} f, e^{-im\theta} \rangle  \equiv T^\chi_g  F$$ 
for $\epsilon = 0 , \frac{1}{2}$, the irreducible representations of the Hilbert space $\cH$ are then indexed by $\chi = (l, 0)$ for non-integral values $l$.
Denote by  
\begin{equation*}
  (T^\chi_g)_{mn} =  \begin{cases}
         (T^\chi_g)^+_{mn} \text{~for~} m > 0, n >0\\
          (T^\chi_g)^-_{mn} \text{~for~} m<0, n < 0
    \end{cases}
\end{equation*}
the matrix elements of unitary representations of the discrete series. 
For $F$ a square-integrable function on $PSU(1,1)$, we have the decomposition into the first principal and the discrete series 
\begin{align}
    F(g) &= \frac{1}{4\pi^2}\sum_{m,n}\left[\int_{0}^\infty a_{mn}(r) \left(T^{(-\frac{1}{2} + ir, 0)}_g\right)_{mn} r \tanh (\pi r)dr + \sum_{l = 1}^M \left(l - \frac{1}{2}\right) b^\pm_{mn} (l)(T^{(l,0)}_g)^\pm_{mn} \right] \label{decompl2qu2}
\end{align}
where the summation runs over integral values of $m,n,l$, $M : = |m|\wedge |n|$ and 
\begin{align*}
    a_{mn}(r) &= \int_{SU(1,1)} F(g) \overline{\left(T_g^{\left(-\frac{1}{2} + ir, 0\right)}\right)_{mn}} dg, \\
    b^\pm_{mn}(l) 
    &= \frac{(-1)^{m-n}\Gamma(l+m+1)\Gamma(l - m +1)}{\Gamma(l+n+1)\Gamma(l -n +1)}\int_{SU(1,1)} F(g)\overline{(T^{(l,0)}_g)^\pm_{mn}}  dg.
\end{align*}
Since each element in $\mathscr{H}^{(l,0)}_m$ can be written as the form
$$
F(g)  = \sum_{n \in \mathbb{Z}}C_{mn}(l)(T^{(l,0)}_g)_{mn}
$$
where $C_{mn}$ are the coefficients according to \eqref{decompl2qu2}. 
By unitarity, the $L^2$-norm of $F$ is given by the Plancherel formula, \begin{align*}
\int_{SU(1,1)} |F(g)|^2dg &= \frac{1}{4\pi^2}\sum_{m,n}\bigg[\int_0^\infty |a_{mn}(r)|^2r \tanh{(\pi r)}dr + \sum_{l = 1}^M\left(l - \frac{1}{2}\right) |b^\pm_{mn} (l)|^2 \bigg]
\end{align*}
The expansion of the regular transformation on $PSU(1,1)$ is 
\begin{align}
     R_g &= \frac{1}{4\pi^2}\sum_{m\in \mathbb{Z}}\left[\int_{0}^\infty a_{mn}(r) \left(T^{(-\frac{1}{2} + ir, 0)}_g\right)_{mn} r \tanh (\pi r)dr + \sum_{l = 1}^M \left(l - \frac{1}{2}\right) b^\pm_{mn} (l)(T^{(l,0)}_g)^\pm_{mn} \right] \label{decompl2qu2(2)}
\end{align}
As $\mathscr{H}_m^{(l,0)} \subset L^2(PSU(1,1))$ is right-invariant under the regular transformation via the 1-parameter subgroup $\Omega$, without loss of generality, the functions $F_{m,l}:= \langle T^{l,0}_\bullet f, e^{-im\theta} \rangle$ has the $\Psi(\Gamma)$-periodic form 
\begin{equation}\label{unwrapping}
F_{m,l} (\phi, \tau, \psi) = \bar{F}_{m,l}(\phi, \tau)e^{-im\psi}, 
\end{equation}
$$
\int_{D \subset [0,2\pi]\times \R_{\geq 0}}|\bar{F}_{m,l}(\phi, \tau)|^2 \sinh{\tau} d\tau d\phi < \infty 
$$
where $D$ is the fundamental domain associated with the lattice $\Psi(\Gamma)$ and 
$$
R_{\omega_3(t)} F_{m,l} = e^{-imt}F_{m,l}. 
$$
By \eqref{eigenfunctionSU(1,1)}, $F_{m,l}$ are the eigenfunctions satisfying the linear differential equation
\begin{equation}\label{quotientpsilaplaceequation}
    \left[-\text{csch}{\tau}~\partial_\tau \sinh{\tau}\partial_\tau - \text{csch}^2{\tau}\left(\partial^2_{\phi\phi} + 2im \sinh{\tau} \partial_\phi  - m^2\right)\right] \bar{F}_{m,l} = l(l+1)\bar{F}_{m,l}.
\end{equation}
From the values of the spectral parameter $r$ in \eqref{rmeigenmodes}, the permissible values of $l$ are contained in the discrete set 
\begin{equation}\label{valuesofl}
    \mathcal{S}(\Gamma) =\left\{ l  \in \mathbb{C} :  l= -\frac{1}{2} + ir_j, \Im{l} = r_j  \geq 0; ~ 
l \in (-1,0)\backslash\left\{-\frac{1}{2}\right\};~ l = n  - \frac{1}{2}, n \in \mathbb{Z}_{> 0}\right \}.
\end{equation}

The geodesic flow on $\Psi(\Gamma)\backslash PSU(1,1)$ is the right-action of the nutation angle 
$$G^t \cdot \Psi(\Gamma) \bar{g} = \Psi(\Gamma) g\bar{g_t}; ~\bar{g_t}  \sim \begin{pmatrix}
     \cosh{\frac{t}{2}} & \sinh{\frac{t}{2}} & 0   \\
        \sinh{\frac{t}{2}} & \cosh{\frac{t}{2}} & 0 \\ 
        0 & 0 & 1 
\end{pmatrix}$$
on the quotient of the Lorentz group $SO(2,1)$ by  $\Psi(\Gamma)$ via the double-covering map $\pi: SU(1,1) \rightarrow  SO(2,1)$. The basis vectors of the non-degenerate subspace of $\mathfrak{so}(2,1)$ is generated by 
$$
J_1 = \begin{pmatrix}
     0 & 1 & 0  \\
        1 & 0 & 0 \\ 
        0 & 0 & 0
\end{pmatrix}, J_2 = \begin{pmatrix}
     0 & 0 & 1  \\
        0 & 0 & 0 \\ 
        1 & 0 & 0 \end{pmatrix}, J_3 = \begin{pmatrix}
     0 & 0 & 0  \\
        0 & 0 & 1 \\ 
        0 & -1 & 0,
\end{pmatrix}
$$ which correspond to the generators 
$$
K_1 = \frac{1}{2}\begin{pmatrix}
     0 & 1  \\
        1 & 0  
\end{pmatrix}, K_2 = \frac{1}{2}\begin{pmatrix}
     0  & -i  \\
        i & 0  \end{pmatrix}, K_3 = \frac{1}{2}\begin{pmatrix}
     i & 0   \\
        0 & -i 
\end{pmatrix}
$$
of $\mathfrak{su}(1,1)$ under the isomorphism of the Lie algebras. 
Here we noted that the action of the geodesic flow is equivalent to the right-regular transformation by $R_{\exp (\frac{t}{2}J_1)}$. From \eqref{decompl2qu2}, the Koopman operator then possesses the expansion of the regular representations
\begin{align}
    V_t 
    &= \sum_{m \in \mathbb{Z}}\left[\sum_{l_j = -\frac{1}{2} + ir_j  \in \mathcal{S}(\Gamma)}R_{\bar{g_t}}\big|_{\mathscr{H}_m^{(l_j,0)}} + \sum_{l = 1}^{|m|} R_{\bar{g_t}}\big|_{\mathscr{H}_m^{(l,0, \pm)}} \right]. \label{decompG^t}
\end{align}
For simple notation, we henceforth use $g_t$ in place of $\bar{g_t}$ to denote the geodesic action via group translation on $\Gamma\backslash G \cong \Psi(\Gamma)\backslash SO(2,1)$.

The exponentials $\exp{\frac{t}K_i}$, $i = 1,2,3$ corresponds to their $\pi$-preimages $\omega_i$. Hence, we have the associated differential operators (Lie-derivatives of the unitary irreducible transformations acting on $\mathcal{D}$ whose basis is of the form $\{e^{-k\theta}\}$) 
\begin{align*}\mathcal{L}_{J_i}(T^{l,0}) &= \frac{d}{dt}\big|_{t=0} T^{(l,0)}_{\omega_i(t) } = \lim_{t\rightarrow 0}t^{-1}\left(R_{\exp(tK_i)}\big|_{\mathscr{H}^{(l,0)}_m} - I\right)   
\end{align*}
 Define $X_\pm =\mathcal{L}_{J_3}(T^{(l,0)}) \mp \mathcal{L}_{J_2}(T^{(l,0)})$ respectively to be the corresponding lowering and raising operators analogous to \eqref{Liebracket} acting on $L^2(\Psi(\Gamma)\backslash PSU(1,1))$.  We obtain the relations 
 \begin{equation}\label{therelyouneed}
     [\mathcal{L}_{J_1}(T^{(l,0)}), X_\pm]= X_\pm,~ [X_-,X_+] = 2\mathcal{L}_{J_1}(T^{(l,0)}).
 \end{equation}
 In the realization on $\mathcal{D}$,  the derivatives with respect to the variable $\theta$ are computed from \eqref{T^chiaction} to be
 \begin{align}
     \mathcal{L}_{J_1}(T^{(l,0)}) &= \frac{1}{2} \left(le^{i\theta} + le^{-i\theta} - 2\sin(\theta) \partial_{\theta}\right), \nonumber \\
     \mathcal{L}_{J_2}(T^{(l,0)}) &= \frac{i}{2} \left( le^{i\theta} - le^{-i\theta} + 2i\cos(\theta)\partial_\theta\right),  \nonumber \\
     \mathcal{L}_{J_3}(T^{(l,0)}) &= \partial_\theta \label{liederivativeonD}
 \end{align}
whose corresponding lowering/raising and flow-generating operators analogous to \eqref{Liebracket} are respectively 
\begin{align*}
    \mathcal{L}_\pm= -\mathcal{L}_{J_1}(T^{(l,0)}) \mp i\mathcal{L}_{J_2}(T^{(l,0)}),~
    \mathcal{L}_3 = i\mathcal{L}_{J_3}(T^{(l,0)}),
\end{align*}
which satisfy the commutation relations
\begin{proposition}
    \begin{equation}\label{lowraise}
        [\mathcal{L}_3, \mathcal{L}_\pm] = \pm \mathcal{L}_\pm, ~ [\mathcal{L}_+, \mathcal{L}_-] = 2\mathcal{L}_3.
\end{equation}
\end{proposition}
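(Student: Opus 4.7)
The plan is to prove \eqref{lowraise} by the same Lie-algebra bookkeeping that gave \eqref{therelyouneed}. The key observation is that $X \mapsto \mathcal{L}_X(T^{(l,0)})$ is a Lie-algebra anti-homomorphism on the smooth vectors of $T^{(l,0)}$, i.e.\ $[\mathcal{L}_X, \mathcal{L}_Y] = -\mathcal{L}_{[X,Y]}$; this is the sign convention already in force in \eqref{therelyouneed}, and it is consistent with a direct check against the explicit formulas \eqref{liederivativeonD}. With this observation, everything reduces to computing structure constants in the basis $\{J_1, J_2, J_3\}$ of $\mathfrak{so}(2,1)$.

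A direct $3\times 3$ matrix multiplication (or equivalently a Pauli-matrix calculation on the $\mathfrak{su}(1,1)$ side using $K_i$) yields
$$[J_1, J_2] = J_3, \qquad [J_2, J_3] = -J_1, \qquad [J_3, J_1] = -J_2.$$
Substituting $\mathcal{L}_3 = i\mathcal{L}_{J_3}(T^{(l,0)})$ and $\mathcal{L}_\pm = -\mathcal{L}_{J_1}(T^{(l,0)}) \mp i\mathcal{L}_{J_2}(T^{(l,0)})$ and expanding bilinearly gives
\begin{align*}
    [\mathcal{L}_3, \mathcal{L}_\pm] &= -i[\mathcal{L}_{J_3}, \mathcal{L}_{J_1}] \pm [\mathcal{L}_{J_3}, \mathcal{L}_{J_2}] = i\mathcal{L}_{[J_3,J_1]} \mp \mathcal{L}_{[J_3, J_2]} = -i\mathcal{L}_{J_2} \mp \mathcal{L}_{J_1} = \pm \mathcal{L}_\pm, \\
    [\mathcal{L}_+, \mathcal{L}_-] &= -2i[\mathcal{L}_{J_1}, \mathcal{L}_{J_2}] = 2i\mathcal{L}_{J_3} = 2\mathcal{L}_3.
\end{align*}

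The only real obstacle is sign bookkeeping, so as a cross-check I would redo the computation in the explicit realization on $\mathcal{D} = C^\infty(\mathbb{S}^1;\mathbb{C})$. Trigonometric simplification of \eqref{liederivativeonD} produces $\mathcal{L}_\pm = -e^{\mp i\theta}(l \mp i\partial_\theta)$ and $\mathcal{L}_3 = i\partial_\theta$, after which the three identities in \eqref{lowraise} drop out either directly from the Leibniz rule applied to the prefactors $e^{\mp i\theta}$, or by testing on the Fourier basis $\{e^{-ik\theta}\}_{k \in \mathbb{Z}}$ of $\mathcal{D}$, on which one reads off the ladder action $\mathcal{L}_+ e^{-ik\theta} = (k-l)\,e^{-i(k+1)\theta}$, $\mathcal{L}_- e^{-ik\theta} = -(k+l)\,e^{-i(k-1)\theta}$, and $\mathcal{L}_3 e^{-ik\theta} = k\,e^{-ik\theta}$. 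A short telescoping of the coefficients $(k \pm l)$ then confirms $[\mathcal{L}_3, \mathcal{L}_\pm] = \pm\mathcal{L}_\pm$ and $[\mathcal{L}_+, \mathcal{L}_-] = 2\mathcal{L}_3$.
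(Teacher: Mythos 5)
Your proposal is correct, and its second half is essentially the paper's own proof: the paper simply records the explicit operators $\mathcal{L}_+ = ie^{-i\theta}\partial_\theta - le^{-i\theta}$, $\mathcal{L}_- = -ie^{i\theta}\partial_\theta - le^{i\theta}$, $\mathcal{L}_3 = i\partial_\theta$ obtained from \eqref{liederivativeonD} and leaves the commutators as a direct computation; your factored form $\mathcal{L}_\pm = -e^{\mp i\theta}(l \mp i\partial_\theta)$ and the ladder action $\mathcal{L}_+e^{-ik\theta} = (k-l)e^{-i(k+1)\theta}$, $\mathcal{L}_-e^{-ik\theta} = -(k+l)e^{-i(k-1)\theta}$, $\mathcal{L}_3 e^{-ik\theta} = k e^{-ik\theta}$ is exactly that computation carried out, and it checks out. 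What you add is the structural route through the structure constants $[J_1,J_2]=J_3$, $[J_2,J_3]=-J_1$, $[J_3,J_1]=-J_2$, which makes the proposition independent of the circle realization and exhibits \eqref{lowraise} as pure $\mathfrak{so}(2,1)$ bookkeeping. The one point there that cannot be waved through is the convention $[\mathcal{L}_X,\mathcal{L}_Y]=-\mathcal{L}_{[X,Y]}$: for a derived representation one would naively expect a homomorphism rather than an anti-homomorphism, so this sign has to be verified rather than assumed. You do anchor it correctly — the operators in \eqref{liederivativeonD} indeed satisfy $[\mathcal{L}_{J_1},\mathcal{L}_{J_2}] = -\partial_\theta = -\mathcal{L}_{J_3}$, and the same convention reproduces \eqref{therelyouneed} in the form $[\mathcal{L}_{J_1},X_\pm]=\pm X_\pm$, $[X_-,X_+]=2\mathcal{L}_{J_1}$ — but note that it is precisely your explicit cross-check on $\mathcal{D}$ that certifies this convention, so the abstract argument is a clean repackaging while the realization on $C^\infty(\mathbb{S}^1;\mathbb{C})$ remains the load-bearing step, just as in the paper.
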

\begin{proof}
The operators are readily given as
\begin{align*}
\mathcal{L}_+ &= ie^{-i\theta}\partial_\theta - le^{-i\theta},~
\mathcal{L}_- = -ie^{i\theta}\partial_\theta -le^{i\theta},~
\mathcal{L}_3 =  i\partial_\theta.
\end{align*}
The relations are obtained from straightforward computations. 

\end{proof}

We have the decomposition of $\mathcal{H}$ into direct sum of unitary irreducible representations 
\begin{equation}\label{decomp3}
    \mathcal{H} = \bigoplus_{l(l+1) \in \sigma\left(\Omega_{\mathbb{S}^1_{\Psi(\Gamma)}}\right)} \bigoplus_{j = 1}^{\mu_{\Psi(\Gamma)}(l)}  \mathcal{H}(T^{(l,0),j}) = \bigoplus_{l \in \mathcal{S}(\Gamma)} \bigoplus_{m \in  \mathcal{Z}} \mu_{\Psi(\Gamma)}(l)  \mathscr{H}_m^{(l,0)} 
\end{equation}
where $\Omega_{\mathbb{S}^1_{\Psi(\Gamma)}}\big|_{\cH(T^{(l,0)})} = -\mathcal{L}^2_{J_1}(T^{(l,0)}) - \mathcal{L}^2_{J_2}(T^{(l,0)}) + \mathcal{L}^2_{J_3}(T^{(l,0)})$  is the induced quadratic form (Casimir) on the quotient space $\mathbb{S}^1_{\Psi(\Gamma)}$, $\mu_{\Psi(\Gamma)}(l)$ denotes the multiplicities of spectral values $l(l+1)$, and $\mathcal{Z}$ is the set of integers in \eqref{decompG^t} (cf. \eqref{decomp1}). Namely, the values of $m$
run over the integers for the principal series, and for a fixed $l \in \mathcal{S}(\Gamma)$ associated with the unitary irreducible representation of the discrete series, $m$ ranges over the integers such that $|m| \geq l +\frac{1}{2}$.
\section{\centering \sc Proof of Theorem \ref{maintheoremmixing1}}

\subsection{\textbf{Spectral decomposition of the resolvent correlation function}.}
Denote by $R^l(z) : = R(z)\big|_{\mathcal{H}(T^{(l,0)})}$ the restriction of the resolvent onto the representations $\cH(T^{(l,0)})$ in \eqref{decomp3} and by $\hat{L}$ the flow-generating operator so that:  $e^{\hat{L}}\big|_{\mathcal{H}(T^{(l,0)})} = e^{\mathcal{L}_{J_1}(T^{l,0})} = V^t\big|_{\mathcal{H}(T^{l,0})}$. Define 
$$\hat{C}_l(F, G)(z) : = \langle R^l(z) F, G \rangle = \int_0^\infty e^{-zs} C_l(F,G)(s) ds,~ \Re{z} > 0,~ F,G \in \mathcal{H}(T^{(l,0)})$$ 
to be the correlation function of $R^l(z)$ and 
\begin{align*}
    C_l(F,G)(t) &:= \langle  V^t\big|_{\mathcal{H}(T^{l,0})} F,G   \rangle_{L^2(SO(2,1))} \\
    &= \frac{1}{4\pi^2}\int_{\R\geq 0}\int_{[0,2\pi]}\int_{[0,2\pi]}R_{g_t}F(g)\overline{G(g)}\sinh{\tau}d\phi d\psi d\tau.
\end{align*}
 For $v \in \cH (T^{(l,0)})$, the restricted resolvent has the integral representation  
\begin{align*}
     R^l(z)v  
     &=\int_0^\infty e^{-zs} T^{(l,0)}_{g_s} v ds.
\end{align*}
Via normalization, we obtain an orthonormal basis (weights) $\{v_{l,m}\}$ in the dual sense from the decomposition of $(\cH(T^{l,0}))^*$ into at-most-1-dimensional subspaces $(\cH_m)^*$ under the action of the subgroup $\Omega$, where $m$ is integral. Namely, these are the row vectors $(T_{\bullet}^{(l,0)})_{m\cdot}$ (cf. \eqref{corresDtoL^2}) where the columm index ranges over the integers. With respect to this basis, 
\begin{align*}
    \hat{C}_l(F, G)(z)  &= \sum_{m,n \in \mathcal{Z}} \langle F, v_{l,m}\rangle  \langle v_{l,n}, G \rangle \int_0^\infty e^{-zs} \langle T^{(l,0)}_{g_s} v_{l,m}, v_{l,n} \rangle ds
\end{align*}
for $F,G \in \mathcal{H}(T^{(l,0)})$. From \eqref{matrixele2},
\begin{align}
    &\langle T^{(l,0)}_{g_s} v_{l,m}, v_{l,n} \rangle = (T^{(l,0)}_{g_s})_{mn} =  B^l_{mn}(\cosh{s}) \nonumber  \\
    &= \frac{1}{2\pi}\int_0^{2\pi}\sum_{k_1, k_2 \geq  0} \begin{pmatrix}
        l+n \\ k_1
    \end{pmatrix} \begin{pmatrix}
        l-n\\ k_2
    \end{pmatrix} \cosh^{2l - (k_1 + k_2)} \frac{s}{2} \sinh^{k_1 + k_2} \frac{s}{2}  \nonumber \\
    &~~~~~~~~~~~~~~~~~~~~~~~~~~~~~~~~~~~~~~~~~~~~~~~~~~~~~~~~~~~~~~~~~~~~~~e^{i(k_1-k_2+m-n)\theta}d\theta  \nonumber\\
    &= \sum_{k_1 \geq 0}\begin{pmatrix}
        l +n \\ k_1 
    \end{pmatrix} \begin{pmatrix}
        l -n \\ k_1  + m -n
    \end{pmatrix} \cosh^{2l - 2k_1 + n - m}\frac{s}{2}\sinh^{2k_1+m-n}\frac{s}{2}  \nonumber\\
    &= \frac{1}{2^{2l}}\sum_{k_1, p,q \geq 0 }\begin{pmatrix}
        l+n \\ k_1
    \end{pmatrix} \begin{pmatrix}
        l -n \\ k_1 + m - n
    \end{pmatrix} \begin{pmatrix}
        2l - 2k_1 + n -m \\p
    \end{pmatrix}\begin{pmatrix}
        2k_1- n +m \\q \end{pmatrix} \nonumber\\
&~~~~~~~~~~~~~~~~~~~~~~~~~~~~~~~~~~~~~~~~~~~~~~~~~~~~~~~~~~~~~~~~~~~~~~(-1)^qe^{(l - p - q)s}. \label{B(ch(s)}
\end{align}
Let $\sigma(l) := \{\lambda = l_\pm - k: k \in \mathbb{Z}_{\geq 0}\}$. Since $\overline{B^l_{mn}(\cosh{s})} = B^{\bar{l}}_{mn}(\cosh{s})$, the expression in \eqref{B(ch(s)} can be rewritten to obtain
\begin{align}
     \hat{C}_l(F, G)(z)  
     &= \sum_{m,n}\sum_{\lambda \in \sigma(l)} \langle F, v_{l,m}\rangle \overline{ \langle G, v_{l,n} \rangle} \frac{\gamma^l_{m,n}(\lambda)}{\lambda - z}.\label{corr1} 
\end{align}
Denote by $v^*_{l,m} = \langle \bullet , v_{l,m}\rangle$ to be the dual basis element. Additionally, we consider the dense subset of finitely-generated functions 
\begin{align}
\mathcal{T} &= \bigg\{
F^{(M)} :=\sum_{l \in \mathcal{S}(\Gamma)} \sum_{j = 1}^{\mu(\Psi(\Gamma))(l)} \sum_{\#\{m\} \leq M} a_{l,m,j} v_{l,m,j},  M \in \mathbb{Z}_{>0} \bigg\} \subset \mathcal{D}(\Psi(\Gamma)\backslash PSU(1,1)). \label{theset}
\end{align}
\begin{proposition}\label{decompofresolvent}Let $F,G \in \mathcal{T}$. For each $l \in \mathcal{S}(\Gamma)$, $\hat{C}_l(F, G)(z)$ is a  meromorphic function on $\{\Re{(z)} > |\Re{(l)}|\}$ given by the formula
\begin{align}
 \hat{C}_l(F, G)(z)&= \sum_{l_\pm}\left[\frac{\langle F, \varphi_{l_\pm} \rangle\overline{\langle G, \varphi_{-\bar{l}_\pm}\rangle}}{l_\pm -z} + \sum_{k \in \mathbb{Z}_{>0}} \frac{\langle F, \varphi_{l_\pm - k} \rangle\overline{\langle G, \varphi_{-\bar{l}_\pm + k}\rangle} }{l_\pm - k - z}\right]\label{decompoftheresolvent}
\end{align}
where 
\begin{align*}
   &\bullet ~~\varphi_{l_+}^*  = \sum_{m\in \mathcal{Z}} \frac{1}{\Gamma(l - m +1)\Gamma(l + m + 1)} v^*_{l,m},~~   \varphi_{-\overline{l_+}}^* = \sum_{n\in \mathcal{Z}} \frac{\Gamma^2(l+1)}{2^{2l}\sqrt{\pi}} v^*_{l,n}, \\
    & \varphi_{l_-}^* = \sum_{m\in \mathcal{Z}} (-1)^m v^*_{l,m},~~   
 \varphi_{-\overline{l_-}}^* = \sum_{n\in \mathcal{Z}} \frac{\Gamma^2(l+1)\Gamma(l - n +1)\Gamma(l + n + 1)}{2^{2\bar{l}}\sqrt{\pi}} v^*_{l,n},  \\
 &\bullet ~~ \varphi_{l_\pm - k}^* =  \varphi_{l_\pm}^* X^k_+,~~ \varphi_{-\overline{l_\pm} + k} = \varphi^*_{-\overline{l_\pm}}X^k_-.
\end{align*}
\end{proposition}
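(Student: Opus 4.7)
The plan is to compute $\hat{C}_l(F,G)(z)$ directly from its integral representation, extract the pole structure, and factor each residue into a product of an $m$-dependent and an $n$-dependent piece that can be identified with the stated functionals $\varphi^*_{l_\pm - k}, \varphi^*_{-\bar{l}_\pm + k}$. By linearity of both sides in $F, G$ and the finite-support nature of $\mathcal{T}$ in the weight basis, it suffices to verify the formula for pure basis pairs $F = v_{l,m}, G = v_{l,n}$; then $\hat{C}_l(F,G)(z) = \int_0^\infty e^{-zs} B^l_{mn}(\cosh s)\,ds$ where $B^l_{mn}$ is the Jacobi matrix element from \eqref{g_taumatrixele}.

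For the Laplace transform I would first rewrite each summand in \eqref{B(ch(s)} in the form
\[
\cosh^{\alpha}\tfrac{s}{2}\sinh^{\beta}\tfrac{s}{2} = \frac{e^{ls}}{2^{2l}}(1+e^{-s})^{\alpha}(1-e^{-s})^{\beta},\qquad \alpha+\beta = 2l,
\]
using $\cosh(s/2) = \tfrac{1}{2}e^{s/2}(1+e^{-s})$, $\sinh(s/2) = \tfrac{1}{2}e^{s/2}(1-e^{-s})$. Expanding both factors as binomial series in $e^{-s}$ and integrating termwise (valid in the half-plane $\Re(z) > \Re(l)$ by absolute convergence) yields simple poles at $z = l - p - q$; regrouping by $k = p+q$ produces the $l_+$-family of poles at $z = l_+ - k$ with residues exactly $\gamma^l_{m,n}(l_+ - k)$ from \eqref{corr1}. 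The companion $l_-$-family at $z = -1 - l - k$ appears through the equivalence $T^{(l,0)}\cong T^{(-1-l,0)}$ of principal-series representations (same Casimir eigenvalue $l(l+1)$), which supplies the second asymptotic branch $e^{-(1+l)s}$ of $B^l_{mn}$; together this extends meromorphy to the strip $\Re(z) > |\Re(l)|$ as claimed.

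The crucial step is to show that the residue $\gamma^l_{m,n}(l_\pm - k)$ factors as $a^{(\pm,k)}_m \cdot \overline{b^{(\pm,k)}_n}$ so that the double sum $\sum_{m,n}\langle F, v_{l,m}\rangle \overline{\langle G, v_{l,n}\rangle}\,\gamma^l_{m,n}$ separates into $\langle F, \varphi_{l_\pm - k}\rangle \overline{\langle G, \varphi_{-\bar{l}_\pm + k}\rangle}$. At $k=0$, Vandermonde's identity collapses the $k_1$-sum in \eqref{B(ch(s)} to
\[
\gamma^l_{m,n}(l_+) = \frac{1}{2^{2l}}\binom{2l}{l+m} = \frac{\Gamma(2l+1)}{2^{2l}\Gamma(l-m+1)\Gamma(l+m+1)},
\]
which depends only on $m$ and factors as (coefficient of $v^*_{l,m}$ in $\varphi^*_{l_+}$) times an $n$-independent constant; Legendre duplication of $\Gamma(2l+1)$ rewrites this constant in the form $\Gamma^2(l+1)/(2^{2l}\sqrt{\pi})$ stated for $\varphi^*_{-\bar{l}_+}$. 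The $l_-$ base case is handled analogously, the alternating sign $(-1)^m$ in $\varphi^*_{l_-}$ arising from evaluating the second asymptotic branch at the weight vectors. For $k > 0$, the factorization is propagated by the raising action: the commutator $[\hat{L}, X_+] = X_+$ from \eqref{therelyouneed} implies that $\varphi^*_{l_\pm - k} := \varphi^*_{l_\pm}\circ X_+^k$ is an eigenfunctional of $\hat{L}$ with eigenvalue $l_\pm - k$, and the explicit action of $X_\pm$ on the weight basis read from \eqref{eigenfunctionSU(1,1)} reproduces exactly the shifted coefficients stated; dually, $\varphi^*_{-\bar{l}_\pm + k} = \varphi^*_{-\bar{l}_\pm}\circ X_-^k$.

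The main obstacle is the combinatorial bookkeeping at general $k$: the direct residue $\gamma^l_{m,n}(l_\pm - k)$ is a triple-binomial sum, and one must verify that it agrees with the ``ladder'' form generated by $\varphi^*_{l_\pm}\circ X_+^k$ applied to $v_{l,m}$ and $\varphi^*_{-\bar{l}_\pm}\circ X_-^k$ applied to $v_{l,n}$. This amounts to a hypergeometric identity that is cleanest to establish by induction on $k$, the inductive step being driven by the commutation relations \eqref{therelyouneed} together with Gamma-function reflection and duplication; separately, cleanly separating the $l_+$ and $l_-$ contributions requires invoking the intertwining equivalence of principal-series representations. Convergence of the formally infinite $m$-sums defining the functionals $\varphi^*_{l_\pm - k}$ against $F, G \in \mathcal{T}$ is automatic since only finitely many weight components of $F, G$ are nonzero, so once the factorization is in hand, summing the residues of the Laplace transform over $k$ and over $l_\pm$ yields precisely \eqref{decompoftheresolvent} and establishes the meromorphy on $\Re(z) > |\Re(l)|$.
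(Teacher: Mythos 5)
Your overall route is genuinely different from the paper's: you propose to Laplace-transform the explicit expansion \eqref{B(ch(s)} termwise, read off the poles and residues directly, and then factor each residue combinatorially, whereas the paper never sums that series at the poles. Instead it inverts the Laplace transform by contour deformation to get $C_l(F,G)(t)=\sum_{\lambda\in\sigma(l)}e^{\lambda t}\mathcal{P}^l_\lambda(G)F$, shows that $X_-$ shifts residue operators ($X_-\hat{L}=(\hat{L}+I)X_-$) and therefore annihilates $\mathcal{P}^l_{l_\pm}$, solves the resulting recurrences to show the annihilated eigenfunctional is unique, and only then extracts the second tensor factor from the two-term asymptotics $e^{ls}$, $e^{\bar{l}s}$ of $B^l_{m0}$ and $B^l_{0n}$ obtained via the reflection formula. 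Measured against what the proposition actually asserts, your plan has two genuine gaps. First, the expansion \eqref{B(ch(s)} contains only exponents $e^{(l-p-q)s}$, so termwise integration can only ever produce the poles $z=l-k$; the entire $l_-$ family at $\bar{l}-k=-1-l-k$, together with the coefficients $(-1)^m$ and the $\Gamma(l\pm n+1)$ weights of $\varphi^*_{l_-}$, $\varphi^*_{-\overline{l_-}}$, never appears. Appealing to the equivalence $T^{(l,0)}\cong T^{(-1-l,0)}$ names the phenomenon but supplies no coefficients. Moreover the reason the second branch is invisible is exactly that your regrouping is not justified in the principal-series case: the $k_1$-sum you want to evaluate by Vandermonde is a ${}_2F_1$ at argument $1$ with $\Re(c-a-b)=1+2\Re(l)=0$ for $l=-\tfrac12+ir$, the boundary where Gauss's summation fails (the terms decay like $1/k_1$), so ``absolute convergence for $\Re(z)>\Re(l)$'' does not license interchanging the $k_1$-sum with residue extraction. (A symptom of the same trouble: even formally, duplication gives $\Gamma(2l+1)/2^{2l}=\Gamma(l+\tfrac12)\Gamma(l+1)/\sqrt{\pi}$, which is not the stated constant $\Gamma^2(l+1)/(2^{2l}\sqrt{\pi})$.)

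Second, the core content of \eqref{decompoftheresolvent} is that every residue is rank one, i.e. $\gamma^l_{m,n}(l_\pm-k)$ factors as an $m$-part times an $n$-part for all $k$, and you defer precisely this to an unproven ``hypergeometric identity by induction on $k$.'' The paper obtains rank-oneness structurally rather than combinatorially: $X_-$ maps $\mathrm{Im}(\mathcal{P}^l_{\lambda})$ into $\mathrm{Im}(\mathcal{P}^l_{\lambda+1})$, hence kills the top residue; the functional recurrences coming from $[X_+,X_-]=2\hat{L}$ admit a one-dimensional solution space, pinning down $\varphi^*_{l_\pm}$; and the exactness of \eqref{exactsequence} then forces $\mathrm{Im}(\mathcal{P}^l_{l_\pm-k})$ to be spanned by $\varphi^*_{l_\pm}X_+^k$, with the dual ladder $\varphi^*_{-\overline{l_\pm}}X_-^k$ identified from the asymptotics. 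Unless you either import that operator argument or actually prove the triple-binomial identity (and separately derive the $l_-$ branch with its coefficients), your proposal does not establish the stated decomposition; as written it verifies, at best, the $l_+$, $k=0$ term in the discrete-series range $\Re(l)>0$ where the Vandermonde step is legitimate.
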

\begin{proof}

For each $\lambda$, we define $\mathcal{P}^l_\lambda: \mathcal{H}  \rightarrow \mathcal{H}^*$ to be the distribution-valued operator
$$ G \mapsto (\bullet \mapsto 2\pi i \text{Res}(\lambda, \hat{C}_l(\bullet, G))).$$
Since the poles do not depend on the choice of $F$ and $G$, $\mathcal{P}^l_\lambda$ is well-defined.
Let us consider the contour integration of $\hat{C}_l(F, G; t)(z) := e^{zt}\hat{C}_l(F,G)(z)$ along the circle $C_R$ of large radius $R$. By \eqref{corr1}, the poles of $\hat{C}_l(F,G)$ are precisely the elements in $\sigma(l)$ and hence 
\begin{align}
   \oint_{C_R}e^{zt}\hat{C}_l(F,G)(z) dz &= 2\pi i\sum_{\lambda \in \sigma(l): |\lambda| < R} e^{\lambda t} \text{Res}(\lambda, \hat{C}_l(F,G)). \label{convergentseries}
\end{align}
By the description of $\mathcal{T}$, it is clear that the series in \eqref{convergentseries} converges for $R \rightarrow \infty$. Let $L_R$ be the closed rectangular contour containing $C_R$ such that: if $\lambda = l - i' \in \text{Int}(C_R)$ for $i' = 1, \ldots, k$, then $l_\pm - k' \notin \text{Int}(L_R)$ for all $k' > k$. 
Since $\sigma(l) \subset \{\Re{z} \leq C \}$ ($C = -\frac{1}{2}, n$ respectively for the principal and the discrete series) and  the function $\hat{C}_l(F,G; t)$ is analytic on $\text{Int}(L_R) \backslash \text{Int}(C_R)$, by deformation principle, 
\begin{align}
    C_l(F,G)(t) &= \lim_{\alpha \rightarrow \infty} \int_{L_\alpha} \hat{C}_l(F,G;t)(z) dz  \nonumber \\
    &= \lim_{R\rightarrow \infty}\oint_{C_R} \hat{C}_l(F,G; t)(z) dz \nonumber \\
    &= \sum_{\lambda \in \sigma(l)} e^{\lambda t} \mathcal{P}^l_{\lambda}(G)F \label{C_lassum}
\end{align}
where the first equality is acquired for any spectral element $l$. Indeed, for a fixed  arbitrary value $\alpha > 0$, we consider the vertical line $L_\alpha := \{\text{Re}(z) = \alpha >0 \}$.
By integrating along $\{ -\infty < \beta <\infty\}$,
\begin{align*} 
  \int_{L_a} \C_l(F,G;t)(z) dz 
  &= \int_0^\infty \int_\R e^{-(\alpha +i\beta)(s-t)}C_l(F,G)(s) d\beta ds \notag \\
  &= \int_0^\infty \delta(s-t)C_l(F,G)(s) ds = C_l(F,G)(t).
\end{align*}
Thereby, \eqref{C_lassum} gives
\begin{equation}\label{corr2}
    \hat{C}_l(F,G)(z) = \sum_{\lambda\in\sigma(l)}\frac{\mathcal{P}^l_\lambda(G) F}{\lambda - z}
\end{equation}
Comparing \eqref{corr1} and \eqref{corr2} yields 
\begin{equation}\label{P^l1}
    \mathcal{P}^l_{\lambda} = \sum_{m,n} \gamma^l_{m,n}(\lambda) v^*_{l,m} \otimes v^\dagger_{l,n}
\end{equation}
where $v^\dagger_{l,m} = \langle v_{l,n}, \bullet \rangle $. 
The operators in \eqref{lowraise} provide the linear functional relations
\begin{equation}\label{recurrent}
\begin{cases}
     v_{l,m}^*X_- &= -i(l+m)v^*_{l,m-1} - imv^*_{l,m} + \frac{i}{2}(l-m)v^*_{l,m+1}, \nonumber \\
    v_{l,m}^*X_+ &= i(l+m)v^*_{l,m-1} - imv^*_{l,m} -  \frac{i}{2}(l-m)v^*_{l,m+1},\nonumber \\
    v_{l,m}^*X_-X_+ &=   \frac{1}{2}(l + m -1)(l+m)v^*_{l,m-2} +(l+m)v^*_{l,m-1} - (l^2 + l + m^2)v^*_{l,m } + \\
    & ~~ (l-m)v^*_{l,m+1} + \frac{1}{2}(l - m -1)(l - m) v^*_{l,m-2}.   
\end{cases}
\end{equation}
Here the matrix elements $v_{l,m}^*X_\pm = \langle X_\pm (\bullet) , e^{-im\theta}  \rangle $ are calculated from the descriptions in \eqref{liederivativeonD}. 
Since $X_-\hat{L} = (\hat{L}+ I) X_-$,  
\begin{equation}\label{H-1}
X_-V^t\big|_{\mathcal{H}(T^{l,0})} - e^tV^t\big|_{\mathcal{H}(T^{l,0})} X_- = X_-T^{(l,0)}_{g_t} - e^tT^{(l,0)}_{g_t}X_- = 0.  
\end{equation}
Applying \eqref{H-1} and the equality $X_-v_{l,m} = v_{l,m+1}$ gives
\begin{align}
\langle  X_-R^l(z)v_{l,n},v_{l,m}\rangle &=   \int_0^\infty  e^{-s(z-1)}\langle  T^{(l,0)}_{g_s} v_{l,n},v_{l,m-1}\rangle ds = \sum_{\lambda \in \sigma(l)}\frac{\gamma^l_{m-1,n}(\lambda)}{z- (\lambda + 1)}  \label{H-2}
\end{align}
and hence from \eqref{corr1}, one can write
\begin{align*}
    X_-\hat{C}_l(F,G)(z) &=\sum_{m,n} \sum_{\lambda\in\sigma(l)} v_{l,m}^*(F)  \overline{ v_{l,n}^*(G)} \frac{\gamma^l_{m-1,n}(\lambda)}{z- (\lambda + 1)} \\
    &= \sum_{m,n} \sum_{\lambda\in\sigma(l)} v_{l,m}^*(F)  \overline{ v_{l,n}^*(G)} \frac{\gamma^l_{m,n}(\lambda +1)}{z- (\lambda + 1)} \\
&=\sum_{\lambda\in\sigma(l)}\frac{\mathcal{P}^l_{\lambda+1}(G) F}{z - (\lambda + 1)}.
\end{align*}
where the second equality follows from rearranging the indices in \eqref{B(ch(s)}. By comparing this expression with $X_-$ applied to \eqref{corr2}, the last equality in \eqref{H-2} suggests that, for the pole $\lambda = l_\pm$, 
\begin{align}
X_-\mathcal{P}^l_\lambda &\in \text{Im}(\mathcal{P}^l_{\lambda +1}) = 0 \label{H-nullsP}
\end{align}
since $\sigma(l)$ does not contain $l_\pm + 1$. 

Suppose $\varphi^*_{l_\pm} \in \text{Im}(\mathcal{P}^l_{l_\pm}) \subset \cH(T^{(l,0)})^*$, where   
\begin{align*}
\varphi^*_{l_\pm} = \sum_{m \in \mathcal{Z}} v^*_{l,m}(\varphi_{l_\pm}) v^*_{l,m}, 
\end{align*}
is an eigenfunctional for the flow-generating operator satisfying \begin{align*}
\begin{cases}
    \varphi^*_{l_\pm} V^t  &= T^{(l,0)}_{\omega_3}\varphi^*_{l_\pm} = \lambda \varphi^*_{l_\pm}, \\
 \varphi^*_{l_\pm} X_- &= 0
\end{cases}
\end{align*}
(from \eqref{H-nullsP}). By the functional relations and $[X_+, X_-] = 2\hat{L}$, 
\begin{align*}
& \frac{1}{2}(l + m -1)(l+m)v^*_{l,m-2}(\varphi_{l_\pm}) +(l+m)v^*_{l,m-1}(\varphi_{l_\pm}) - (l^2 + l + m^2)v^*_{l,m }(\varphi_{l_\pm}) + \\ 
 &(l-m)v^*_{l,m+1}(\varphi_{l_\pm}) + \frac{1}{2}(l - m -1)(l - m) v^*_{l,m-2}(\varphi_{l_\pm}) = -2\lambda v^*_{l,m}(\varphi_{l_\pm}),  \\
 &-i(l+m)v^*_{l,m-1}(\varphi_{l_\pm}) - imv^*_{l,m}(\varphi_{l_\pm}) + \frac{i}{2}(l-m)v^*_{l,m+1}(\varphi_{l_\pm}) = 0.\label{recurrel}
\end{align*}
The unique non-trivial solutions to the recurrence given by 
\begin{equation*}
    v^*_{l,m}(\varphi_{l_+})  =\frac{1}{\Gamma(l)\Gamma(l + m + 1)}, ~   v^*_{l,m}(\varphi_{l_-})  = (-1)^m
\end{equation*}
exist for only when $\lambda = l_+$ and $\lambda = l_-$, respectively. This shows that for any positive integer $k$, the sequence 
\begin{equation}\label{exactsequence}
    \ldots \xrightarrow{X_-} \text{Im}(\mathcal{P}^l_{l_\pm-k}) \xrightarrow{X_-} \text{Im}(\mathcal{P}^l_{l_\pm-k + 1}) \xrightarrow{X_-} \ldots \xrightarrow{X_-}  \text{Im}(\mathcal{P}^l_{l_\pm})  \xrightarrow{X_-} 0
\end{equation}
is exact and $\text{Im}(\mathcal{P}^l_{l_\pm - k})$ are respectively the 1-dimensional functional spaces
spanned by 
\begin{equation}\label{raisingweights}
\varphi_{l_+}^* X^k_+ = \sum_{m\in \mathcal{Z}} \frac{1}{\Gamma(l - m +1)\Gamma(l + m + 1)} v^*_{l,m}X^k_+,~~   \varphi_{l_-}^*X^k_+ = \sum_{m\in \mathcal{Z}} (-1)^m v^*_{l,m}X^k_+.
\end{equation}
Furthermore, if $l = -\frac{1}{2}+ ir$, we have $\bar{l} = -(l+1)$. We also use the reflection formula $\Gamma(z)\Gamma(1-z) = \frac{\pi}{\sin{\pi z }}$ to obtain the asymptotic
\begin{align}
    B^l_{m0}(\cosh{s})   
 &= \Gamma^2(l + 1) \cosh^{2l} \frac{s}{2} \coth^m \frac{s}{2} \times \nonumber \\
    &\sum_{q = 0 \vee m }^\infty \frac{\tanh^{2q}\frac{s}{2}}{\Gamma(q + 1)\Gamma(l - q )\Gamma(q + 1 - m)\Gamma(l + m - q +1)}  \nonumber \\
&= \left[\frac{1}{2^{2l}\sqrt{\pi}}\frac{\Gamma^2(l+1)}{\Gamma(l - m +1)\Gamma(l + m + 1)}e^{ls} + \frac{(-1)^m\Gamma(l+1)}{2^{2\bar{l}} \sqrt{\pi}}e^{\bar{l}s}\right] (1 + O(e^{-s})) \nonumber. 
\end{align}
for sufficiently large $s>0$. By symmetry from the Legendre function $B^n_l(\cosh{s})$, 
\begin{align*}
    B^l_{0n}(\cosh{s}) &= \frac{\Gamma(l - n +1)\Gamma(l +  n + 1)}{\Gamma^2(l+1)}B^l_{n0}(\cosh{s})  \nonumber  \\
    &= \left[\frac{1}{2^{2l}\sqrt{\pi}} e^{ls} + \frac{(-1)^m\Gamma(l+n+1)\Gamma(l - n + 1)}{2^{2\bar{l}}\sqrt{\pi} \Gamma(l +1) }e^{\bar{l}s}\right] (1 + O(e^{-s})). \label{B^l_0m}
\end{align*}
Thereby, we deduce from the product $\gamma_{mn}^l(\lambda) = v^*_{l,m}(\varphi_{\lambda})v^\dagger_{l,n}(\varphi_{-\overline{\lambda}}) =  v^*_{l,m}(\varphi_\lambda)\overline{v^*_{l,n}(\varphi_{-\overline{\lambda}})}$ the expansions
\begin{equation}
 \varphi_{-\overline{l_+}}^* = \sum_{n\in \mathcal{Z}} \frac{\Gamma^2(l+1)}{2^{2l}\sqrt{\pi}} v^*_{l,n},~~   
 \varphi_{-\overline{l_-}}^* = \sum_{n\in \mathcal{Z}} \frac{\Gamma^2(l+1)\Gamma(l - n +1)\Gamma(l + n + 1)}{2^{2\bar{l}}\sqrt{\pi}} v^*_{l,n}. 
\end{equation}
Subsequently, by \eqref{exactsequence}, the 1-dimensional functional vector space $\text{Im}(\mathcal{P}^l_{-\overline{l_\pm} + k})$ are respectively spanned by 
\begin{equation}\label{lowerweights}
     \varphi_{-\overline{l_+}}^* X^k_-,~~ \varphi_{-\overline{l_-}}^*X^k_-. 
\end{equation}
From this, we can write \eqref{P^l1} as 
\begin{equation}
  \mathcal{P}^l_{l_\pm - k} = \varphi^*_{l_\pm - k } \otimes \varphi^\dagger_{-\overline{l_\pm} + k}.\label{P^l}
\end{equation} 
\end{proof}
\subsection{\textbf{Proof of Theorem \ref{maintheoremmixing2}}.} 
Consider any elements $F,G \in \cH(T^{(l,0)})$ contained the space of exponentially decaying test functions given in terms of the basis 
\begin{equation}
    F = \sum_{m} \langle F, v_{l,m} \rangle v_{l,m}, ~G = \sum_{n}\langle G, v_{l,n} \rangle v_{l,n} 
\end{equation}
whereby $|\langle F, v_{l,m} \rangle|m^\alpha \rightarrow 0$, ~$|\langle G, v_{l,n} \rangle|n^\beta \rightarrow 0$  as $|m|\wedge |n| \rightarrow 0$ for any  $\alpha, \beta \in \R_{>0}$. In light of \eqref{C_lassum}, \eqref{raisingweights}, \eqref{lowerweights}, \eqref{P^l} and the linear functional relations, the restricted correlation function written as the sum 
\begin{align}
    C_l(F,G)(t) &= \sum_{\lambda \in \sigma(l)}\langle F, \varphi_{\lambda}\rangle \overline{\langle G, \varphi_{-\bar{\lambda}} \rangle}  e^{\lambda t} \nonumber \\
    &= \sum_{\lambda \in \sigma(l)} \sum_{m,n} \langle F, v_{l,m} \rangle \langle  v_{l,m}, \varphi_\lambda \rangle \overline{\langle G, v_{l,n} \rangle} ~ \overline{\langle v_{l,n}, \varphi_{-\bar{\lambda}} \rangle}  e^{\lambda t} \nonumber \\
    &= \sum_{k \in \mathbb{Z}_{\geq 0}} \sum_{m,n} \varphi^*_{l_\pm}X^k_+(v_{l,m}) \overline{\varphi^*_{l_\pm}X^k_-(v_{l,n})} \langle F, v_{l,m} \rangle \overline{\langle G, v_{l,n} \rangle} e^{(l_\pm - k)t}\nonumber \\
    &= \sum_{k \in \mathbb{Z}_{\geq 0}} \sum_{m,n} O(m^kn^k) \langle F, v_{l,m} \rangle \overline{\langle G, v_{l,n} \rangle} e^{(l_\pm - k)t}
\end{align}
converges absolutely for any fixed time $t \in \R_{\geq 0}$. By density, the absolute convergence holds for any pairs of  $L^2$-functions in $\cH(T^{(l,0)})$.   
From \eqref{decompG^t}, we obtain the expression for the general correlation function 
\begin{equation}\label{decompforC}
C( \bullet, \bullet )(t) = \sum_{l \in \mathcal{S}(\Gamma)} \sum_{\lambda \in \sigma(l)} \sum_{j = 1}^{\mu(\Psi(\Gamma))(l)} e^{\lambda t}  \varphi^*_{\lambda, j} \otimes \varphi^\dagger_{-\bar{\lambda}, j} .
\end{equation}
For any $t \geq 0$, we claim that $C( \bullet, \bullet) \in \mathcal{D}'(\Psi(\Gamma)\backslash PSU(1,1))\otimes \mathcal{D}'(\Psi(\Gamma)\backslash PSU(1,1))$ is a bounded linear operator on the dense subspace of finitely generated $L^2$-functions, hence implies the series in \eqref{decompforC} converges. Indeed, we consider $F^{(M)}, G^{(N)} \in \mathcal{T}$. 
By \eqref{unwrapping} and recall that $\overline{v}_{l,m,j}, \overline{v}_{l,n,j}$ are the eigenfunctions to the $\Omega$-quotient Laplace equation in \eqref{quotientpsilaplaceequation}, 
\begin{align*}
    F^{(M)} (\phi, \tau, \psi)&=  \sum_{\#\{m\} \leq M}\overline{F^{(M)}}(\phi, \tau)e^{-im\psi} \\
    &=  \sum_{\#\{m\} \leq M} \sum_{l \in \mathcal{S}(\Gamma)} \sum_{j = 1}^{\mu(\Psi(\Gamma))(l)} \langle \overline{F^{(M)}}, \overline{v}_{l,m,j}\rangle_{L^2(D)} \overline{v}_{l,m,j}(\phi, \tau) e^{-im\psi},   \\
    G^{(N)} (\phi, \tau, \psi) &=   \sum_{\#\{n\} \leq N}\overline{G^{(N)}}(\phi, \tau)e^{-in\psi} \\
    &= \sum_{\#\{n\} \leq N} \sum_{l \in \mathcal{S}(\Gamma)} \sum_{j = 1}^{\mu(\Psi(\Gamma))(l)} \langle \overline{G^{(N)}}, \overline{v}_{l,n,j}\rangle_{L^2(D)} \overline{v}_{l,n,j}(\phi, \tau) e^{-in\psi}. 
\end{align*}
Denote $a_{l,m,j}  =\langle \overline{F^{(M)}}, \overline{v}_{l,m,j}\rangle_{L^2(D)}, b_{l,n,j} =  \langle \overline{G^{(N)}}, \overline{v}_{l,n,j}\rangle_{L^2(D)} $ and we further deduce that . We have   
\begin{align*}
    |C( F^{(M)},G^{(N)} )(t)| & \leq \sum_{m,n}  \sum_{l \in \mathcal{S}(\Gamma)} \sum_{j = 1}^{\mu(\Psi(\Gamma))(l)} \left|\sum_{\lambda \in \sigma(l)}  e^{\lambda t} \varphi^*_{\lambda, j} (v_{l,m,j}) \varphi^\dagger_{-\bar{\lambda}, j}(v_{l,n,j}) \right| \left| a_{l,m,j}\overline{b_{l,n,j}} \right| \\
    &  =  \sum_{m,n} \sum_{l \in \mathcal{S}(\Gamma)} \sum_{j = 1}^{\mu(\Psi(\Gamma))(l)} |B^l_{mn}(\cosh{t})| \left| a_{l,m,j}\overline{b_{l,n,j}} \right|\\
    &\leq  C ||\overline{F^{(M)}}||_{L^2(D)} ||\overline{G^{(N)}}||_{L^2(D)}.
\end{align*}
The last inequality follows from the previous symmetry relation of the Jacobi functions and its generating function 
$$\Phi(1, t) = \sum_{m\in\mathbb{Z} \cup \mathbb{Z}+\frac{1}{2}}B^l_{mn}(\cosh{t}).$$
\qed
\subsection{\textbf{Proof of Theorem \ref{maintheoremmixing1}}.} Let $\gamma$ be a closed geodesic of period $T^\#_\gamma$ starting at an element $g\in \Psi(\Gamma)\backslash PSU(1,1)$ fixed by $G^t$. The differential of the flow $dG^{T^\#_\gamma}$ is given by the adjoint action associated with the $\mathfrak{su}(1,1)$-generators $K_i$, $i=1,2,3$ evaluated at $t = T^\#_\gamma$. The respective images of the tangential vectors on the transversal surface to the geodesic in $E^s(g)$, $E^u(g)$ and $E^0(g)$
\begin{align}
    dG^{T^\#_\gamma} (K_2) = \text{Ad}_{g_{T^\#_\gamma}}(K_2) &= \sinh{\frac{t}{2}}K_1 + \cosh{\frac{t}{2}} K_2  = e^{-T^\#_\gamma}K_2, \nonumber \\
    dG^{T^\#_\gamma} (K_1) = \text{Ad}_{g_{T^\#_\gamma}}(K_1) &= \cosh{\frac{t}{2}}K_2 + \sinh{(\frac{t}{2})}K_1 =  e^{T^\#_\gamma}K_1 , \nonumber \\
    dG^{T^\#_\gamma} (K_3) = \text{Ad}_{g_{T^\#_\gamma}}(K_3) &= K_3. \label{P=dG}
\end{align}
The linearized Poincar\'{e} map for $\gamma$ has the matrix form 
\begin{equation*}
    \mathbf{P}_\gamma = \begin{pmatrix}
        e^{T^\#_\gamma} & 0 \\ 0 & e^{-T^\#_\gamma}
    \end{pmatrix}
\end{equation*}
Hence, $|\det(I - \mathbb{P}_\gamma)| = (1- e^{T^\#_\gamma})(1- e^{-T^\#_\gamma}) = -2\sinh{T^\#_\gamma}$. From the spectral decomposition in \eqref{decompforC}, periodicity requires $e^{\lambda T^\#_\gamma} = 1$, hence implies $\Im{(\lambda)} T^\#_\gamma = 2\pi k'$, $k' \in \mathbb{Z}$ and that 
$$
T^\#_\gamma = \frac{2\pi}{r_j}
$$
for the eigenvalues $\lambda = l_\pm - k$, $l_\pm (j) = -\frac{1}{2} \pm ir_j$ in $\mathcal{S}(\Gamma)$ associated with unitary irreducible representations of the principal series. This shows that $G^t$ is Lefschetz, hence we obtain the flat trace distribution 
\begin{equation}\label{flattrace2}
    \textup{Tr}^\flat V^t = \sum_{r_j\in \Im{(S(\Gamma))}}\sum_{n = 1}^\infty \frac{2\pi}{r_j\sinh{\frac{n\pi}{r_j}}}\delta\left(t - \frac{2n\pi}{r_j}\right).
\end{equation}
\qed


{\centering}


\begin{thebibliography}{10}
\bibitem[Bo]{Bo}
{\sc Borel, A.,}
\emph{Automorphic Forms on $SL(2,\R)$}. Cambridge University Press, 1997.

\bibitem[G]{G}
   Guillemin, Victor. Lectures on spectral theory of elliptic operators. Duke Math. J. 44 (1977), no. 3, 485-517.

\bibitem[Iw]{Iw}
{\sc Iwasawa, K.,}
\emph{On Some Types of Topological Groups}. Annals of Mathematics. \textbf{50 (3)}, 1949, 507–558. doi:10.2307/1969548. JSTOR 1969548.

\bibitem[Kn]{Kn}
{\sc Knapp, A.W.,}
\emph{Representation Theory of Semisimple Groups: An Overview Based on Examples}, Princeton University Press, 2001.


\bibitem[L]{L}
{\sc Lang, S.,}
\emph{$SL_2(\R)$}, Graduate Texts in Mathematics, vol. \textbf{105}. Springer-Verlag, New York, 1985. Reprint of the 1975 edition. MR803508.

\bibitem[La]{La}
{\sc Lam, H.,}
\emph{Non-isometric pairs of Riemannian manifolds with the same Guillemin-Ruelle zeta function}, arXiv:2208.04550v2 [math.SP].


\bibitem[Ma]{Ma}
{\sc Maass, H.,}
\emph{Über eine neue Art von nichtanalytischen automorphen Funktionen und die Bestimmung Dirichletscher Reihen durch Funktionalgleichungen}. Mathematische Annalen \textbf{121} (1949): 141-183.

\bibitem[Mc]{Mc}
{\sc McKean, H.P.,}
\emph{Selberg's trace formula as applied to a compact Riemannian surface}, Comm. Pure Appl. Math. \textbf{25}, 1972, 225-246, DOI 10.1002/cpa.3160250302. MR0473166.

\bibitem[Ra]{Ra}
{\sc Ratner, M.,}
\emph{The rate of mixing for geodesic and horocycle flows}, Ergod. Th. \& Dynam. Sys. \textbf{7}, 1987, 267-288.

\bibitem[Ru]{Ru}
{\sc Ruelle, D.,}
\emph{Resonances of chaotic dynamical systems}, Physical Review Letter, 1986.

\bibitem[Sm]{Sm}
{\sc Smale, S.,}
\emph{Differentiable dynamical systems}, Bulletin of the American Mathematical Society, 1967.

\bibitem[V]{V}
{\sc Vilenkin, N. Ja.,}
\emph{Special Functions and the Theory of Group Representations}, American Mathematical Society, Translations of Mathematical Monographs, vol. \textbf{22}, 1968.

\bibitem[Wa]{Wa}
{\sc Watson, G.N.,}
\emph{A Treatise on the Theory of Bessel Functions}, 2nd ed. Cambridge: Cambridge University Press, 1922.

\bibitem[Ze]{Ze}
{\sc Zelditch, S.,}
\emph{Eigenfunctions of the Laplacian on a Riemannian manifold}, CBMS - AMS. \textbf{125}.


\end{thebibliography}
\end{document}